\newtheorem{theorem}{Theorem}[section]
\newtheorem{lemma}[theorem]{Lemma}
\numberwithin{equation}{section}
\begin{document}

\title[Convergence of an adaptive finite element DtN method]{Convergence of an adaptive finite element DtN method for the elastic wave scattering by periodic
structures}

\author{Peijun Li}
\address{Department of Mathematics, Purdue University, West Lafayette, Indiana
47907, USA.}
\email{lipeijun@math.purdue.edu}

\author{Xiaokai Yuan}
\address{Department of Mathematics, Purdue University, West Lafayette, Indiana
47907, USA.}
\email{yuan170@math.purdue.edu}


\subjclass[2010]{78A45, 65N12, 65N15, 65N30}

\keywords{Elastic wave equation, scattering by periodic structures, adaptive finite element method, transparent boundary condition, DtN map, a posteriori estimate.}

\begin{abstract}
Consider the scattering of a time-harmonic elastic plane wave by a periodic rigid surface. The elastic wave propagation is governed
by the two-dimensional Navier equation. Based on a Dirichlet-to-Neumann (DtN)
map, a transparent boundary condition (TBC) is introduced to reduce the
scattering problem into a boundary value problem in a bounded domain. By using
the finite element method,  the discrete problem is considered, where the TBC is
replaced by the truncated DtN map. A new duality argument is developed to derive
the a posteriori error estimate, which contains both the finite element
approximation error and the DtN truncation error. An a posteriori error estimate
based adaptive finite element algorithm is developed to solve the elastic
surface scattering problem. Numerical experiments are presented to demonstrate
the effectiveness of the proposed method. 
\end{abstract}

\maketitle

\section{Introduction}

The scattering theory in periodic structures, which are known as gratings in
optics, has many significant applications in micro-optics including the design
and fabrication of optical elements such as corrective lenses, anti-reflective
interfaces, beam splitters, and sensors \cite{P-80, BCM-SIAM-2001}. Driven by
the optical industry applications, the time-harmonic scattering problems have
been extensively studied for acoustic and electromagnetic waves in periodic
structures. We refer to \cite{BDC-JOSAA-1995, CF-TAMS-1991} and the references
cited therein for the mathematical results on well-posedness of the solutions
for the diffraction grating problems. Computationally, various numerical methods
have been developed, such as boundary integral equation method
\cite{WL-JOSA-2009, NS-SJMA-1991}, finite element method \cite{B-SJNA-1995,
B-SJAM-1997}, boundary perturbation method \cite{BR-JOSA-1993}. Recently, the
scattering problems for elastic waves have received much attention due to the
important applications in seismology and geophysics \cite{A-M2AS-99,
A-JIEA-1999, LWZ-IP-2015}. This paper concerns the scattering of a time-harmonic
elastic plane wave by a periodic surface. Compared with acoustic and
electromagnetic wave equations, the elastic wave equation is less studied due to
the complexity of the coexistence of compressional and shear waves with
different wavenumbers. In addition, there are two challenges for the
scattering problem: the solution may have singularity due to a possible
nonsmooth surface; the problem is imposed in an open domain. In this paper, we
intend to address both issues.

The first issue can be overcome by using the a posteriori error estimate based
adaptive finite element method. A posteriori error estimates are computable
quantities from numerical solutions and measure the solution errors of discrete problems without
requiring any a priori information of real solutions \cite{BR-SJNA-1978,
M-JCAM-1998}. They are crucial in designing numerical algorithms for mesh
modification such as refinement and coarsening \cite{D-SJNA-1996, V-1996}. The aim is to equidistribute the
computational effort and optimize the computation. The a posteriori error
estimate based adaptive finite element method has the ability of error control
and asymptotically optimal approximation property \cite{CKNS-SJNA-2008,
MNS-SJNA-2000}. It has become an important numerical tool for solving
differential equations, especially for those where the solutions have
singularity or multiscale phenomena.

The second issue concerns the domain truncation. The surface scattering problem
is imposed in an open domain, which needs to be truncated into a bounded
computational domain. An appropriate boundary condition is required on the
boundary of the truncated domain so that no artificial wave reflection occurs. 
Such a boundary condition is called a non-reflecting boundary condition or a
transparent boundary condition (TBC) \cite{BT-CPAM-1980, EM-MC-1977, GG-WM-2003, GK-WM-1990, GK-JCP-1995}. Despite a huge amount of work
done so far in this aspect, it still remains to be one of the important and active research topics in the computational wave propagation. 
Since B\'{e}renger proposed a perfectly matched layer (PML) technique to solve Maxwell's equations
\cite{B-JCP-1994}, the research on PML has undergone a tremendous development
due to its effectiveness and simplicity \cite{BP-MC-07, BPT-MC-10,
BW-SJNA-2005, CW-MOTL-1994, CM-SISC-1998, CT-G-2001, HSB-JASA-1996,
HSZ-SIMA-2003, LS-C-1998, TY-ANM-1998}. Various constructions of PML have been
proposed for solving a wide range of wave propagation problems. The idea of PML
technique is to surround the domain of interest by a layer of finite thickness
of fictitious medium that may attenuate the waves coming from inside of the
computational domain. When the waves reach the outer boundary of the PML region,
their amplitudes are so small that the homogeneous Dirichlet boundary
condition can be imposed.

Combined with the PML technique, an adaptive finite element method was proposed
in \cite{CW-SINUM-2003} to solve the two-dimensional diffraction grating
problem. It was shown that the a posteriori error estimate consists of the
finite element discretization error and the PML truncation error which decays
exponentially with respect to the PML parameters. Due to the competitive
numerical performance, the methods was quickly extended to solve the two- and
three-dimensional obstacle scattering problems \cite{CL-SINUM-2005, CC-MC-2008}
and the three-dimensional diffraction grating problem \cite{BPW-MC-2010}. Based
on the a posteriori error analysis, the adaptive finite element PML method
provides an effective numerical strategy to solve a variety of acoustic,
electromagnetic, and elastic wave propagation problems which are imposed in
unbounded domains \cite{CXZ-MC-2016, JLLZ-MMNA-2017}.

The Dirichlet-to-Neumann (DtN) method is another approach to handle the domain truncation. The idea is to construct an explicit solution, which is usually given as an infinite Fourier series, in the exterior of the domain of interest. By taking the normal derivative of the solution, the Neumann data can be expressed in terms of the Dirichlet data. This relationship gives the DtN map and can be used as a boundary condition, which is known as the TBC. Since the TBC is exact, the artificial boundary can be put as close as possible to the scattering structures, which can reduce the size of the computational domain.

Recently, as a viable alternative to the PML, the adaptive finite element DtN method has been proposed to solve the scattering problems imposed in open domains, such as the obstacle scattering problems \cite{JLLZ-JSC-2017, JLZ-CCP-2013}, the diffraction grating problems \cite{WBLLW-SJNA-2015}. In this approach, the TBC is applied on the artificial boundary which is chosen to enclose the domain of interest. These TBCs
are based on nonlocal DtN maps and are given as infinite Fourier series. Practically, the infinite series needs to be truncated into the sum of finite number of terms by choosing an appropriate truncation parameter $N$. It is
known that the convergence of the truncated DtN map could be arbitrarily slow to
the original DtN map in the operator norm \cite{HNPX-JCAM-2011}. To overcome
this issue, the duality argument has to be developed to obtain the a posteriori
error estimate between the solution of the scattering problem and the finite
element solution. Comparably, the a posteriori error estimates consists of the
finite element discretization error and the DtN truncation error, which decays exponentially
with respect to the truncation parameter $N$.

In this paper, we present an adaptive finite element DtN method for the elastic wave scattering problem in periodic
structures. The goal is threefold: (1) prove the exponential convergence of the
truncated DtN  operator; (2) give a complete a  posteriori  error estimate; (3)
develop an effective adaptive finite element algorithm.  This paper
significantly extends the work on the acoustic scattering problem \cite{WBLLW-SJNA-2015}, where the
Helmholtz equation was considered. Apparently, the techniques differ greatly
from the existing work because of the complicated transparent boundary condition
associated with the elastic wave equation. A related work can be found in \cite{LY-2019} for an
adaptive finite element DtN method for solving the obstacle scattering problem
of elastic waves.

Specifically, we consider the scattering of an elastic plane wave by a one-dimensional rigid
periodic surface, where the wave motion is governed by the two-dimensional Navier equation. 
The open space above the surface is assumed
to be filled with a homogeneous and isotropic elastic medium. The Helmholtz
decomposition is utilized to reduce the elastic wave equation equivalently into
a coupled boundary value problem of the Helmholtz equation. By combining the quasi-periodic boundary condition and a DtN
operator, an exact TBC is introduced to reduce the original scattering problem into a boundary
value problem of the elastic wave equation in a bounded domain. The discrete
problem is studied by using the finite element method with the truncated DtN
operator. Based on the Helmholtz decomposition, a new duality argument is
developed to obtain an a posteriori error estimate between the solution of the
original scattering problem and the discrete problem. The a posteriori error
estimate contains the finite element approximation error and the DtN operator truncation
error, which is shown to decay exponentially with respect to the
truncation parameter. The estimate is used to design the adaptive finite element
algorithm to choose elements for refinements and to determine the truncation
parameter $N$. Due to the exponential convergence of the truncated DtN operator, the choice of the truncation parameter $N$ is not sensitive to the given tolerance. Numerical experiments are presented to demonstrate the effectiveness
of the proposed method.

The outline of the paper is as follows. In Section \ref{section:pf}, the model equation is introduced for the scattering problem. In Section \ref{section:bvp}, the boundary value problem is formulated by using the TBC and the corresponding weak formulation is studied. In Section \ref{section:dp}, the discrete problem is considered by using the finite element method with the truncated DtN operator. Section \ref{section:pea} is devoted to the a posterior error estimate. In Section \ref{section:ne}, we discuss the numerical implementation of the adaptive algorithm and present two examples to illustrate the performance of the proposed method. The paper is concluded with some general remarks and directions for future work in Section \ref{section:c}.

\section{Problem formulation}\label{section:pf}

Consider the scattering of a time-harmonic plane wave by an elastically rigid surface, which is assumed to be invariant in the $z$-axis and periodic in the $x$-axis with period $\Lambda$. Due to the periodic structure, the problem can be restricted into a single periodic cell where $x\in (0, \Lambda)$. Let $\boldsymbol x=(x, y)\in\mathbb R^2$. Denote the surface by $S=\{\boldsymbol x\in\mathbb R^2: y=f(x),\, x\in(0, \Lambda)\},$ where $f$ is a Lipschitz continuous function. Let $\nu$ and $\tau$ be the unit normal and tangent vectors on $S$, respectively. Above $S$, the open space is assumed to be filled with a homogeneous and isotropic elastic medium with unit mass density. Denote $\Omega_f^+=\{\boldsymbol x\in\mathbb R^2 : y>f(x),\, x\in (0, \Lambda)\}.$ Let $\Gamma=\{\boldsymbol x\in \mathbb{R}^2: y=b,\, x\in(0, \Lambda)\}$ and $\Gamma'=\{\boldsymbol x\in \mathbb{R}^2: y=b',\, x\in (0, \Lambda)\}$, where $b$ and $b'$ are constants satisfying $b>b'>\max_{x\in (0, \Lambda)} f(x)$.  Denote 
$\Omega=\{\boldsymbol x\in\mathbb R^2: f(x)<y<b,\,x\in (0, \Lambda)\}$. The problem geometry is shown in Figure \ref{pg}.

\begin{figure}
\centering
\includegraphics[width=0.3\textwidth]{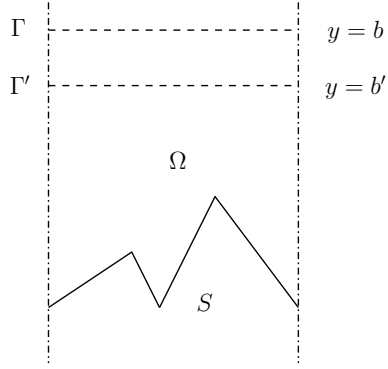}
\caption{Schematic of the elastic wave scattering by a
periodic structure.}
\label{pg}
\end{figure}

The incident wave $\boldsymbol u^{\rm inc}$ satisfies the two-dimensional
elastic wave equation
\[
 \mu\Delta\boldsymbol u^{\rm inc}+(\lambda+\mu)\nabla\nabla\cdot\boldsymbol
u^{\rm inc}+\omega^2\boldsymbol u^{\rm inc}=0\quad\text{in} ~ \Omega_f^+,
\]
where $\omega>0$ is the angular frequency and $\mu, \lambda$ are the Lam\'{e}
parameters satisfying $\mu>0, \lambda+\mu>0$. Specifically, the incident wave
can be the compressional plane wave $\boldsymbol u^{\rm
inc}(\boldsymbol x)=\boldsymbol d e^{{\rm i}\kappa_1\boldsymbol
x\cdot\boldsymbol d}$ or the shear plane wave $\boldsymbol u^{\rm
inc}(\boldsymbol x)=\boldsymbol d^\perp e^{{\rm i}\kappa_2\boldsymbol
x\cdot\boldsymbol d}$, where $\boldsymbol d=(\sin\theta, -\cos\theta)^\top,
\boldsymbol d^\perp=(\cos\theta, \sin\theta)^\top, \theta=(-\pi/2, \pi/2)$ is the
incident angle, $\kappa_1=\omega/(\lambda+2\mu)^{1/2}$ and
$\kappa_2=\omega/\mu^{1/2}$ are known as the compressional and shear
wavenumbers, respectively. For clarity, we shall take the compressional plane
wave as the incident field. The results will be similar if the incident field is
the shear plane wave.

Due to the interaction between the incident wave and the surface, the
scattered wave is generated and satisfies
\begin{equation}\label{uNavier}
\mu\Delta\boldsymbol{u}+(\lambda+\mu)\nabla\nabla\cdot\boldsymbol{u}
+\omega^2\boldsymbol{u}=0 \quad \text{in} ~ \Omega_f^+.
\end{equation}
Since the surface $S$ is elastically rigid, the displacement of the
total field vanishes and the scattered field satisfies
\begin{equation}\label{uBoundary}
\boldsymbol{u}=-\boldsymbol{u}^{\rm inc}\quad \text{on} ~ S.
\end{equation}

For any solution $\boldsymbol{u}$ of \eqref{uNavier}, it has the Helmholtz
decomposition 
\begin{equation}\label{uHelmholtz}
\boldsymbol{u}=\nabla\phi_1+{\bf curl} \phi_2,
\end{equation}
where $\phi_j, j=1, 2$ are scalar potential functions and ${\bf
curl}\phi_2=(\partial_y\phi_2, -\partial_x\phi_2)^\top$. Substituting
\eqref{uHelmholtz} into \eqref{uNavier}, we may verify that $\phi_j$ satisfies the Helmholtz equation
\begin{equation}\label{HelmholtzDec}
\Delta \phi_j+\kappa_j^2 \phi_j=0 \quad\text{in} ~ \Omega^+_f.
\end{equation}
Taking the dot product of \eqref{uBoundary} with $\nu$ and
$\tau$, respectively, yields that 
\begin{equation*}
\partial_{\nu}\phi_1-\partial_{\tau}\phi_2=\boldsymbol{u}^{\rm inc}\cdot
\nu, \quad \partial_{\nu}\phi_2+\partial_{\tau}\phi_1=-\boldsymbol{u}^{\rm
inc}\cdot\tau\quad \text{on}~ S.
\end{equation*}

Let $\alpha=\kappa_{\rm p}\sin\theta$. It is clear to note that
$\boldsymbol{u}^{\rm inc}$ is a quasi-periodic function with respect to $x$,
i.e., $\boldsymbol{u}^{\rm inc}(x,y) e^{-{\rm i}\alpha x}$
is a periodic function with respect to $x$. Motivated by uniqueness of the solution, we
require that the solution $\boldsymbol u$ of
\eqref{uNavier}--\eqref{uBoundary} is also a quasi-periodic function of $x$
with period $\Lambda$. 

We introduce some notations and functional spaces. Let $H^1(\Omega)$ be the
standard Sobolev space. Denote a quasi-periodic functional space
\[
H_{\rm qp}^{1}(\Omega)=\{ u\in H^{1}(\Omega): u(\Lambda, y)=u(0, y)e^{{\rm
i}\alpha \Lambda}\}.
\]
Let $H^{1}_{S, {\rm qp}}(\Omega)=\{u\in H^1_{\rm qp}(\Omega): u=0\text{ on } S\}$. 
Clearly, $H_{\rm qp}^{1}(\Omega)$ and $H^{1}_{S, {\rm qp}}(\Omega)$ are
subspaces of $H^1(\Omega)$ with the standard $H^{1}$-norm. For
any function $u\in H_{\rm qp}^{1}(\Omega)$, it admits the Fourier expansion on $\Gamma$:
\[
u(x, b)=\sum_{n\in \mathbb{Z}}u^{(n)}(b)e^{{\rm i}\alpha_n x},
\quad u^{(n)}(b)=\frac{1}{\Lambda}\int_{0}^{\Lambda} u(x, b) e^{-{\rm
i}\alpha_n x}{\rm d}x,\quad
\alpha_n=\alpha+n\left(\frac{2\pi}{\Lambda}\right).
\]	
The trace functional space $H^{s}(\Gamma), s\in \mathbb{R}$ is defined by
\[
H^{s}(\Gamma)=\left\{u\in L^2(\Gamma): \|u\|_{H^{s}(\Gamma)}<\infty
\right\},
\] 
where the norm is given by
\[
\|u\|_{H^{s}(\Gamma)}=\left(\Lambda \sum\limits_{n\in
\mathbb{Z}}\left(1+\alpha_n^2\right)^s 
|u^{(n)}(b)|^2\right)^{1/2}.
\] 
Let $\boldsymbol{H}_{{\rm qp}}^{1}(\Omega), \boldsymbol{H}_{S, {\rm
qp}}^{1}(\Omega), \boldsymbol{H}^{s}(\Gamma)$ be the
Cartesian product spaces equipped with the corresponding
2-norms of $H^1_{\rm qp}(\Omega), H_{S, {\rm qp}}^{1}(\Omega), H^{s}(\Gamma)$,
respectively. Throughout the paper, the notation $a\lesssim b$ stands for $a\leq
C b$, where $C$ is a positive constant whose value is not
required but should be clear from the context.

\section{The boundary value problem}\label{section:bvp}

The scattering problem \eqref{uNavier}--\eqref{uBoundary} is formulated in the
open domain $\Omega^{+}_f$, which needs to be truncated into the bounded
domain $\Omega$. An appropriate boundary condition is required on
$\Gamma$ to avoid artificial wave reflection.

Let $\phi_j$ be the solution of the Helmholtz equation
\eqref{HelmholtzDec} along with the bounded outgoing wave condition.
It is shown in \cite{LWZ-IP-2015} that $\phi_j$ is a 
quasi-periodic function and admits the Fourier series expansion
\begin{equation}\label{TBC:Helmholtz}
\phi_j(x,y)=\sum\limits_{n\in \mathbb{Z}} \phi_j^{(n)}(b) e^{{\rm
i}\left(\alpha_n x+ \beta_j^{(n)} (y-b)\right)},\quad y>b,
\end{equation}
where
\begin{equation}\label{beta}
\beta_j^{(n)}=
\begin{cases} 
\left(\kappa_j^2-\alpha_n^2\right)^{1/2}, & |\alpha_n|<\kappa_j,\\
{\rm i}\left(\alpha_n^2-\kappa_j^2\right)^{1/2}, &|\alpha_n|>\kappa_j.
\end{cases}
\end{equation}
We assume that $\kappa_j\neq |\alpha_n|$ for $n\in \mathbb{Z}$ to exclude
possible resonance. Taking the normal derivative of \eqref{TBC:Helmholtz}
on $\Gamma$ yields
\begin{eqnarray*}
\partial_y \phi_j(x, b)=\sum\limits_{n\in\mathbb{Z}} {\rm
i}\beta_j^{(n)}\phi_j^{(n)}(b)e^{{\rm i}\alpha_n x}. 
\end{eqnarray*}

As a quasi-periodic function, the solution $\boldsymbol u(x, y)=(u_1(x, y),
u_2(x, y))^\top$ admits the Fourier expansion
\[
\boldsymbol{u}(x,y)=\sum\limits_{n\in\mathbb{Z}}(u_1^{(n)}(y),
u_2^{(n)}(y))^\top e^{{\rm i}\alpha_n x}, \quad y>b,
\]
where $u_j^{(n)}$ is the Fourier coefficient of $u_j$. Define a boundary
operator
\[
\mathscr B\boldsymbol{u}=\mu\partial_y \boldsymbol{u}+(\lambda+\mu)(0,1)^\top 
\nabla\cdot\boldsymbol{u}\quad\text{on} ~ \Gamma. 
\]
It is shown in \cite{JLLZ-MMNA-2017} that the solution of
\eqref{uNavier} satisfies the transparent boundary condition
\begin{equation}\label{TBC: uTBC}
\mathscr B\boldsymbol{u}=\mathscr{T}\boldsymbol{u}:=\sum\limits_{n\in\mathbb{Z}}
M^{(n)}(u_1^{(n)}(b), u_2^{(n)}(b))^\top e^{{\rm i}\alpha_n
x}\quad\text{on} ~ \Gamma, 
\end{equation}
where $\mathscr T$ is called the Dirichlet-to-Neumann (DtN) operator and $M^{(n)}$ is a $2\times 2$ matrix given by 
\begin{equation}\label{TBCMatrix}
M^{(n)}=\frac{\rm i}{\chi_n}\begin{bmatrix}
\omega^2\beta_1^{(n)} & \mu\alpha_n\chi_n-\omega^2\alpha_n\\[2pt]
\omega^2\alpha_n-\mu\alpha_n\chi_n &
\omega^2\beta_2^{(n)}\end{bmatrix}.
\end{equation}
Here $\chi_{n}=\alpha_n^2+\beta_1^{(n)}\beta_2^{(n)}$.

By the transparent boundary condition \eqref{TBC: uTBC}, the variational problem
of \eqref{uNavier}--\eqref{uBoundary} is to find $\boldsymbol{u}\in
\boldsymbol{H}_{{\rm qp}}^{1}(\Omega)$
with $\boldsymbol{u}=-\boldsymbol{u}^{\rm inc}$ on $S$ such that
\begin{equation}\label{TBC: variational1}
a(\boldsymbol{u}, \boldsymbol{v})=0, \quad \forall \boldsymbol{v}\in
\boldsymbol{H}_{S, {\rm qp}}^{1}(\Omega),
\end{equation}
where the sesquilinear form $a: \boldsymbol H^1_{{\rm
qp}}(\Omega)\times\boldsymbol H^1_{{\rm qp}}(\Omega)\to\mathbb C $ is
defined as 
\begin{align*}
a(\boldsymbol{u}, \boldsymbol{v})=\mu\int_{\Omega}\nabla\boldsymbol{u}
:\nabla\overline{\boldsymbol{v}}{\rm d}\boldsymbol{x}
+(\lambda+\mu)\int_{\Omega}\left(\nabla\cdot\boldsymbol{u}
\right)\left(\nabla\cdot\overline{\boldsymbol{v}}\right){\rm d}\boldsymbol{x}\\
-\omega^2\int_{\Omega}\boldsymbol{u}\cdot\overline{\boldsymbol{v}}
{\rm d}\boldsymbol{x}-\int_{\Gamma}\mathscr{T}\boldsymbol{u}\cdot\overline{
\boldsymbol {v}}{\rm d}s.
\end{align*}
Here $A:B={\rm tr}(AB^\top)$ is the Frobenius inner product of two square
matrices $A$ and $B$.

The well-posedness of the variational problem \eqref{TBC: variational1} was
discussed in \cite{EH-MMAS-2010}. It was shown that the variational problem 
\eqref{TBC: variational1} has a unique solution for all frequencies if the
surface $S$ is Lipschitz continuous.  Hence we may assume that the variational
problem \eqref{TBC: variational1} admits a unique solution and the solution
satisfies the estimate
\begin{equation}\label{uBabu}
\|\boldsymbol{u}\|_{\boldsymbol{H}^{1}(\Omega)}\lesssim
\|\boldsymbol{u}^{\rm inc}\|_{\boldsymbol{H}^{1/2}(S)}\lesssim
\|\boldsymbol{u}^{\rm inc}\|_{\boldsymbol{H}^{1}(\Omega)}.
\end{equation}
By the general theory of Babuska and Aziz \cite{BA-AP-1973}, there exists
$\gamma>0$ such that the following inf-sup condition holds
\[
\sup\limits_{0\neq \boldsymbol{v}\in  \boldsymbol{H}_{{\rm qp}}^{1}(\Omega)}
\frac{|a(\boldsymbol{u}, \boldsymbol{v})|}{\|\boldsymbol{v}\|_{
\boldsymbol{H}^{1}(\Omega)}} \geq \gamma 
\|\boldsymbol{u}\|_{\boldsymbol{H}^{1}(\Omega)},
\quad \forall \boldsymbol{u}\in  \boldsymbol{H}_{{\rm qp}}^{1}(\Omega).
\]

\section{The discrete problem}\label{section:dp}

We consider the discrete problem of \eqref{TBC: variational1} by using the
finite element approximation. Let $\mathcal M_h$ be a regular triangulation of
$\Omega$, where $h$ denotes the maximum diameter of all the elements in
$\mathcal M_h$. Since our focus is on the a posteriori error estimate, for
simplicity, we assume that $S$ is polygonal and ignore the approximation error
of the boundary $S$. Thus any edge $e\in\mathcal M_h$ is a
subset of $\partial \Omega$ if it has two boundary vertices. Moreover, we
require that if $(0, y)$ is a node on the left boundary, then $(\Lambda, y)$ is
also a node on the right boundary and vice versa, which allows to define a
finite element space whose functions are quasi-periodic respect to $x$.  

Let $\boldsymbol V_h\subset \boldsymbol{H}_{\rm qp}^{1}(\Omega)$ be a
conforming finite element space, i.e.,
\[
\boldsymbol V_h:= \left\{\boldsymbol v\in C(\overline{\Omega})^2:
\boldsymbol v|_{K}\in P_m(K)^2 \text{ for any } K\in\mathcal M_h,\,
\boldsymbol v(0,y)=e^{-{\rm i}\alpha \Lambda}\boldsymbol v(\Lambda, y)\right\},
\]
where $m$ is a positive integer and $P_m(K)$ denotes the set of all polynomials
of degree no more than $m$. The finite element approximation to the variational
problem \eqref{TBC: variational1} is to find $\boldsymbol u^h\in\boldsymbol
V_h$ with $\boldsymbol u^h=-\boldsymbol u^{\rm inc}$ on $S$ such that 
\begin{equation}\label{TBC: variational2}
 a(\boldsymbol u^h, \boldsymbol v^h)=0,\quad\forall\boldsymbol v^h\in
\boldsymbol V_{h, S},
\end{equation}
where $\boldsymbol V_{h, S}=\{\boldsymbol v\in\boldsymbol V_h: \boldsymbol
v=0\text{ on } S\}$. 

In the variational problem \eqref{TBC: variational2}, the boundary operator
$\mathscr{T}$ is defined as an infinite series, in practice, it must be
truncated to a sum of finitely many terms as follows 
\begin{equation}\label{TBC: tTBC}
\mathscr{T}_N \boldsymbol{u}=\sum\limits_{|n|\leq
N}M^{(n)}(u_1^{(n)}(b), u_2^{(n)}(b))^\top e^{{\rm i}\alpha_n x}, 
\end{equation}
where $N>0$ is a sufficiently large constant. Using the truncated boundary
operator, we arrive at the truncated finite element approximation: Find
$\boldsymbol{u}_N^h\in\boldsymbol V_{h}$ such that it satisfies
$\boldsymbol{u}_N^h=-\boldsymbol{u}^{\rm inc}$ on
$S$ and the variational problem 
\begin{equation}\label{TBC: variational3}
a_N(\boldsymbol{u}^h_N, \boldsymbol{v}^h)=0,\quad \forall
\boldsymbol{v}^h\in \boldsymbol{V}_{h, S},
\end{equation}
where the sesquilinear form $a_N: \boldsymbol V_h\times\boldsymbol
V_h\to\mathbb C$ is defined as
\begin{align*}
a_N(\boldsymbol{u}, \boldsymbol{v})
=\mu\int_{\Omega}\nabla\boldsymbol{u}:\nabla\overline{\boldsymbol {v}}
{\rm d}\boldsymbol{x}+(\lambda+\mu)\int_{\Omega}(\nabla\cdot\boldsymbol{u})
(\nabla\cdot\overline{\boldsymbol{v}}){\rm d}\boldsymbol{x}\\
-\omega^2 \int_{\Omega}  \boldsymbol{u}\cdot \overline{\boldsymbol{v}}{\rm
d}\boldsymbol{x} -\int_{\Gamma}\mathscr{T}_N\boldsymbol{u}\cdot\overline{
\boldsymbol{v}} {\rm d}s. 
\end{align*}

It follows from \cite{S-MC-74} that the discrete inf-sup condition of the
sesquilinear form $a_N$  can be established for sufficient large $N$ and small
enough $h$. Based on the general theory in \cite{BA-AP-1973}, it can be
shown that the discrete variational problem \eqref{TBC: variational3} has a
unique solution $\boldsymbol u_N^h\in \boldsymbol{V}_{h}$. The details are
omitted for brevity.

\section{The a posteriori error analysis}\label{section:pea}

For any triangular element $K\in\mathcal M_h$, denoted by $h_K$ its diameter.
Let $\mathcal B_h$ denote the set of all the edges of $K$. For any $e\in\mathcal
B_h$, denoted by $h_e$ its length. For any interior edge $e$ which is the common
side of $K_1$ and $K_2\in\mathcal M_h$, we define the jump residual across $e$
as
\[
J_e=\mu\nabla \boldsymbol{u}_N^h|_{K_1}\cdot
\boldsymbol{\nu}_1+(\lambda+\mu)(\nabla\cdot\boldsymbol{u}_N^h|_{K_1})
\boldsymbol{\nu}_1+\mu\nabla \boldsymbol{u}_N^h|_{K_2}\cdot\boldsymbol{\nu}
_2+(\lambda+\mu)(\nabla\cdot\boldsymbol{u}_N^h|_{K_2} )\boldsymbol{\nu}_2,
\]
where $\boldsymbol{\nu}_j$ is the unit outward normal vector on the boundary of
$K_j, j=1,2$. For any boundary edge $e\subset \Gamma$, we define the jump
residual 
\[
J_e=2(\mathscr{T}_N\boldsymbol{u}_N^{h}-\mathscr B\boldsymbol{u}_N^{h}).
\]
For any boundary edge on the left line segment of $\partial\Omega$, i.e.,
$e\in\{x=0\}\cap \partial K_1$ for some $K_1\in\mathcal M_{h}$, and
its corresponding edge on the right line segment of $\partial\Omega$, i.e.,
$e'\in\{x=\Lambda\}\cap \partial K_2$ for some $K_2\in\mathcal M_{h}$, the
jump residual is
\begin{align*}
J_{e} &=\left[\mu\partial_x
\boldsymbol{u}_N^{h}|_{K_1}+(\lambda+\mu)(1,0)^\top
\nabla\cdot\boldsymbol{u}_N^{h}|_{K_1}\right]
-e^{-{\rm i}\alpha\Lambda}\left[\mu\partial_x
\boldsymbol{u}_N^{h}|_{K_2}
+(\lambda+\mu)(1,0)^\top \nabla\cdot\boldsymbol{u}_N^{h}|_{K_2}\right],\\
J_{e'} &=e^{{\rm i}\alpha\Lambda}
\left[\mu\partial_x
\boldsymbol{u}_N^{h}|_{K_1}+(\lambda+\mu)(1,0)^\top
\nabla\cdot\boldsymbol{u}_N^{h}|_{K_1}\right]	
-\left[\mu\partial_x \boldsymbol{u}_N^{h}|_{K_2}
+(\lambda+\mu)(1,0)^\top
\nabla\cdot\boldsymbol{u}_N^{h}|_{K_2}\right].
\end{align*}
For any triangular element $K\in\mathcal M_h$, denote by $\eta_K$ the local
error estimator which is given by 
\[
\eta_K=h_K \|\mathscr{R}\boldsymbol{u}_N^{h}\|_{\boldsymbol{L}^{2}(K)}
+\left(\frac{1}{2}\sum\limits_{e\in\partial K}h_e
\|J_e\|^2_{\boldsymbol{L}^2(e)}\right)^{1/2},
\]
where $\mathscr R$ is the residual operator defined by
\[
\mathscr{R}\boldsymbol{u}=\mu\Delta\boldsymbol{u}
+(\lambda+\mu)\nabla\left(\nabla\cdot\boldsymbol{u}\right)
+\omega^2\boldsymbol{u}.
\]

For convenience, we introduce a weighted norm of
$\boldsymbol{H}^1(\Omega)$ as
\[
\vvvert\boldsymbol{u}\vvvert^2_{\boldsymbol{H}^{1}(\Omega)}
=\mu\int_{\Omega}  |\nabla\boldsymbol{u}|^2{\rm d}\boldsymbol{x}
+(\lambda+\mu)\int_{\Omega}|\nabla\cdot\boldsymbol{u}|^2 {\rm d}\boldsymbol{x}
+\omega^2 \int_{\Omega} |\boldsymbol{u}|^2{\rm d}\boldsymbol{x}.
\]
It is easy to check that 
\begin{equation}\label{Equal_norm}
\min \left(\mu, \omega^2\right)
\|\boldsymbol{u}\|^2_{\boldsymbol{H}^{1}(\Omega)}
\leq \vvvert\boldsymbol{u}\vvvert^2_{\boldsymbol{H}^{1}(\Omega)}
\leq \max\left(2\lambda+3\mu,
\omega^2\right)\|\boldsymbol{u}\|^2_{\boldsymbol{H}^{1}(\Omega)},
\quad \forall \boldsymbol{u}\in \boldsymbol{H}^{1}(\Omega).
\end{equation}
which implies that the weighted norm is equivalent to standard
$\boldsymbol{H}^{1}(\Omega)$ norm.

Now we state the main result of this paper.

\begin{theorem}\label{Main_Result}
Let $\boldsymbol{u}$ and $\boldsymbol{u}_N^h$ be the solutions of the 
variational problem \eqref{TBC: variational1} 
and \eqref{TBC: variational3}, respectively. Then for sufficient large
$N$, the following a posteriori error estimate holds 
\begin{eqnarray*}
\|\boldsymbol{u}-\boldsymbol{u}_N^h
\|_{\boldsymbol{H}^{1}(\Omega)}\lesssim 
\left(\sum\limits_{K\in\mathcal M_h} \eta^2_{K}\right)^{1/2}
+\max_{|n|>N}\left(|n|e^{-|\beta_2^{(n)}|(b-b')}\right)
\|\boldsymbol{u}^{\rm inc}\|_{\boldsymbol{H}^{1}(\Omega)}.
\end{eqnarray*}
\end{theorem}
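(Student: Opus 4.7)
The plan is to combine the continuous inf-sup condition for $a$ with a Galerkin-orthogonality decomposition and element-wise integration by parts to produce the standard residual indicators $\eta_K$, and then to control the DtN truncation remainder on $\Gamma$ via the Helmholtz decomposition together with a duality argument, broadly following the strategy of the Helmholtz case \cite{WBLLW-SJNA-2015} but adapted to the coupled elastic system.

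Since $\boldsymbol u-\boldsymbol u_N^h\in\boldsymbol H^1_{S,{\rm qp}}(\Omega)$, the inf-sup condition gives $\|\boldsymbol u-\boldsymbol u_N^h\|_{\boldsymbol H^1(\Omega)}\lesssim\sup_{\boldsymbol v}|a(\boldsymbol u-\boldsymbol u_N^h,\boldsymbol v)|/\|\boldsymbol v\|_{\boldsymbol H^1(\Omega)}$. Using $a(\boldsymbol u,\boldsymbol v)=0$, the Galerkin orthogonality $a_N(\boldsymbol u_N^h,\boldsymbol v^h)=0$ for $\boldsymbol v^h\in\boldsymbol V_{h,S}$, and the identity $a(\boldsymbol w,\boldsymbol\phi)-a_N(\boldsymbol w,\boldsymbol\phi)=-\int_\Gamma(\mathscr T-\mathscr T_N)\boldsymbol w\cdot\overline{\boldsymbol\phi}\,{\rm d}s$, I obtain
\[
a(\boldsymbol u-\boldsymbol u_N^h,\boldsymbol v)=-a(\boldsymbol u_N^h,\boldsymbol v-\boldsymbol v^h)+\int_\Gamma(\mathscr T-\mathscr T_N)\boldsymbol u_N^h\cdot\overline{\boldsymbol v^h}\,{\rm d}s,
\]
with $\boldsymbol v^h\in\boldsymbol V_{h,S}$ chosen as a quasi-periodic Scott-Zhang/Cl\'ement interpolant of $\boldsymbol v$. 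Element-wise integration by parts on $a(\boldsymbol u_N^h,\boldsymbol v-\boldsymbol v^h)$ extracts the volume residual $\mathscr R\boldsymbol u_N^h$, the interior and quasi-periodic edge jumps $J_e$, and, after using $\mathscr B\boldsymbol u_N^h-\mathscr T\boldsymbol u_N^h=-(\mathscr T-\mathscr T_N)\boldsymbol u_N^h-J_e/2$ on $\Gamma$-edges, an extra $\int_\Gamma(\mathscr T-\mathscr T_N)\boldsymbol u_N^h\cdot\overline{(\boldsymbol v-\boldsymbol v^h)}\,{\rm d}s$ that combines with the $\boldsymbol v^h$-piece into a single integral against $\boldsymbol v$. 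Standard local approximation estimates then reduce the whole computation to
\[
|a(\boldsymbol u-\boldsymbol u_N^h,\boldsymbol v)|\lesssim\Bigl(\sum_K\eta_K^2\Bigr)^{1/2}\|\boldsymbol v\|_{\boldsymbol H^1(\Omega)}+\Bigl|\int_\Gamma(\mathscr T-\mathscr T_N)\boldsymbol u_N^h\cdot\overline{\boldsymbol v}\,{\rm d}s\Bigr|.
\]

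To bound the boundary term I would split $\boldsymbol u_N^h=\boldsymbol u+(\boldsymbol u_N^h-\boldsymbol u)$. For the $\boldsymbol u$-part, the Helmholtz decomposition $\boldsymbol u=\nabla\phi_1+{\bf curl}\,\phi_2$ valid for $y>\max f$ and the Rayleigh-Bloch expansion \eqref{TBC:Helmholtz} give $|\phi_j^{(n)}(b)|=e^{-|\beta_j^{(n)}|(b-b')}|\phi_j^{(n)}(b')|$ for evanescent indices $|\alpha_n|>\kappa_j$. Since $\kappa_1<\kappa_2$ implies $|\beta_2^{(n)}|\le|\beta_1^{(n)}|$, the slower shear factor dominates, and combining with the asymptotic $|M^{(n)}|\lesssim|n|$, Parseval on $\Gamma$, the trace inequality on $\Gamma'$, and the a priori bound \eqref{uBabu} yields the desired $\max_{|n|>N}(|n|e^{-|\beta_2^{(n)}|(b-b')})\|\boldsymbol u^{\rm inc}\|_{\boldsymbol H^1(\Omega)}\|\boldsymbol v\|_{\boldsymbol H^1(\Omega)}$ bound.

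The main obstacle is the complementary term $\int_\Gamma(\mathscr T-\mathscr T_N)(\boldsymbol u_N^h-\boldsymbol u)\cdot\overline{\boldsymbol v}\,{\rm d}s$, because the finite element error carries no intrinsic Fourier decay on $\Gamma$. Here I would deploy a duality argument: introduce an adjoint Navier problem whose solution itself admits a Helmholtz decomposition and hence the same exponential decay of evanescent modes, so that the $|n|$-growth of $M^{(n)}$ is absorbed by the exponential smallness supplied from the adjoint side. A Young-type absorption of the resulting $\|\boldsymbol u-\boldsymbol u_N^h\|_{\boldsymbol H^1(\Omega)}$ factor back into the left-hand side, valid once $N$ is large enough that the exponential factor is small, then closes the estimate. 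Extending this duality mechanism from the scalar Helmholtz setting to the coupled compressional/shear elastic case, where $M^{(n)}$ mixes the two wave types and the DtN analysis must track both $\beta_1^{(n)}$ and $\beta_2^{(n)}$ simultaneously, is the central novel technical step.
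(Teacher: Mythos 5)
Your reduction via Galerkin orthogonality and Scott--Zhang interpolation to the bound
$|a(\boldsymbol u-\boldsymbol u_N^h,\boldsymbol v)|\lesssim(\sum_K\eta_K^2)^{1/2}\|\boldsymbol v\|_{\boldsymbol H^1(\Omega)}+|\int_\Gamma(\mathscr T-\mathscr T_N)\boldsymbol u_N^h\cdot\overline{\boldsymbol v}\,{\rm d}s|$ is fine and coincides with the computation inside Lemma \ref{Posterior: FirstTwo}, and your treatment of the $\boldsymbol u$-part of the boundary term matches Lemma \ref{Posterior: Lemma3}. The genuine gap is the last step: after splitting $\boldsymbol u_N^h=\boldsymbol u-\boldsymbol\xi$ you are left, \emph{inside the inf-sup supremum}, with $\int_\Gamma(\mathscr T-\mathscr T_N)\boldsymbol\xi\cdot\overline{\boldsymbol v}\,{\rm d}s$ where $\boldsymbol v$ is an arbitrary test function. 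No duality argument applies to this pairing: the dual problem you invoke has data living in $\Omega$ (in the paper it is \eqref{DualProblem} with data $\boldsymbol\xi$), and its solution replaces the \emph{second} argument of the pairing only when that argument is the dual solution $\boldsymbol p$ itself, as in \eqref{Dualxixi}--\eqref{4thTerm}; it gives no information about a generic $\boldsymbol v$. For generic arguments the best available bound is $\|\boldsymbol\xi\|_{\boldsymbol H^{1/2}(\Gamma)}\|\boldsymbol v\|_{\boldsymbol H^{1/2}(\Gamma)}$ with an $N$-independent constant, because $\mathscr T_N\to\mathscr T$ fails in the operator norm (the very point cited in the introduction via \cite{HNPX-JCAM-2011}) and $\boldsymbol\xi$ contains the finite element function $\boldsymbol u_N^h$, which has no Fourier decay on $\Gamma$. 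Hence there is no small factor in front of $\|\boldsymbol\xi\|_{\boldsymbol H^1(\Omega)}\|\boldsymbol v\|_{\boldsymbol H^1(\Omega)}$, and the ``Young-type absorption'' you propose cannot be carried out: the inf-sup constant is fixed, so an $O(1)$ multiple of $\|\boldsymbol\xi\|_{\boldsymbol H^1(\Omega)}$ on the right cannot be absorbed into the left.

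This is precisely why the paper abandons the inf-sup route. It works instead with the G\aa rding-type energy identity \eqref{Xi} in the weighted norm, where the truncation operator only ever appears in two harmless combinations: (i) $a(\boldsymbol\xi,\boldsymbol v)+\int_\Gamma(\mathscr T-\mathscr T_N)\boldsymbol\xi\cdot\overline{\boldsymbol v}\,{\rm d}s$, in which the bad $(\mathscr T-\mathscr T_N)\boldsymbol u_N^h$ contribution cancels and only $(\mathscr T-\mathscr T_N)\boldsymbol u$ survives (Lemma \ref{Posterior: FirstTwo}); and (ii) $\int_\Gamma(\mathscr T-\mathscr T_N)\boldsymbol\xi\cdot\overline{\boldsymbol p}\,{\rm d}s$ with the dual solution $\boldsymbol p$, which is controlled by explicitly solving two-point boundary value problems for the Helmholtz potentials of $\boldsymbol p$ in $\Omega'$ (Lemmas \ref{Lemma7}--\ref{Lemma8}). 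Note also that even there the decay is not purely exponential as you assert: since $\boldsymbol p$ satisfies an inhomogeneous Navier equation in $\Omega'$ with source $-\boldsymbol\xi$, Lemma \ref{Lemma8} contains an extra $|n|^{-1}\|\xi_j^{(n)}\|_{L^\infty(b',b)}$ term, and the final bound \eqref{4thTerm} is only $N^{-1}\|\boldsymbol\xi\|^2_{\boldsymbol H^1(\Omega)}$ --- small enough to absorb for large $N$, but only because of the explicit $N^{-1}$ factor. Finally, the energy identity produces the terms $2\omega^2\|\boldsymbol\xi\|^2_{\boldsymbol L^2(\Omega)}$ and $\Re\int_\Gamma\mathscr T_N\boldsymbol\xi\cdot\overline{\boldsymbol\xi}\,{\rm d}s$, which your sketch does not account for; the paper needs the positivity of $\hat M^{(n)}$ for large $|n|$ (Lemma \ref{Lemma4}) and the trace interpolation inequality (Lemma \ref{Posterior: Lemma5}) for the latter, and the $L^2$ duality estimate for the former. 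Without restructuring your argument along these lines, the proposed proof does not close.
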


It is easy to note that the a posteriori error consists of two parts: the
finite element discretization error and the truncation error of the DtN
operator. We point out that the latter is almost exponentially decaying since
$b>b'$ and $|\beta_2^{(n)}|>0$. In practice, the DtN truncated error can be
controlled to be small enough such that it does not contaminate the finite
element discretization error.

In the rest of the paper, we shall prove the a posteriori error estimate in
Theorem \ref{Main_Result}. First, let's state the trace regularity for functions
in $H^{1}_{\rm qp}(\Omega)$. The proof can be found in
\cite{CW-SINUM-2003}.

\begin{lemma}\label{Lemma2}
For any $u\in H^{1}_{\rm qp}(\Omega)$, the following estimates hold
\[
\|u\|_{H^{1/2}(\Gamma_b)}\lesssim \|u\|_{H^{1}(\Omega)},\quad
\|u\|_{H^{1/2}(\Gamma_{b'})}\lesssim \|u\|_{H^{1}(\Omega)}.
\] 
\end{lemma}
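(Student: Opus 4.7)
The plan is to pass to Fourier series in the $x$-variable. Since $b'>\max_x f(x)$, there exists $c$ with $\max_x f(x)<c<b'<b$, and the strip $\widetilde\Omega:=(0,\Lambda)\times(c,b)$ is contained in $\Omega$. Any $u\in H^1_{\rm qp}(\Omega)$ restricts to $\widetilde\Omega$ with $\|u\|_{H^1(\widetilde\Omega)}\le\|u\|_{H^1(\Omega)}$, and on $\widetilde\Omega$ the quasi-periodicity lets me write $u(x,y)=\sum_{n\in\mathbb Z}u^{(n)}(y)e^{{\rm i}\alpha_n x}$ with $y\mapsto u^{(n)}(y)\in H^1(c,b)$. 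By Parseval on each horizontal slice and integration in $y$,
\[
\|u\|_{H^1(\widetilde\Omega)}^2=\Lambda\sum_{n\in\mathbb Z}\int_c^b\!\!\Bigl((1+\alpha_n^2)|u^{(n)}(y)|^2+|\partial_y u^{(n)}(y)|^2\Bigr){\rm d}y.
\]

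The main step is a one-dimensional trace inequality for each Fourier coefficient. Fix a smooth cutoff $\chi\in C^1([c,b])$ with $\chi(c)=0$ and $\chi(b)=1$; then for $v\in H^1(c,b)$,
\[
|v(b)|^2=\int_c^b\frac{\rm d}{{\rm d}y}\bigl(\chi(y)|v(y)|^2\bigr){\rm d}y\le \|\chi'\|_\infty\|v\|_{L^2(c,b)}^2+2\|v\|_{L^2(c,b)}\|v'\|_{L^2(c,b)}.
\]
Applying this with $v=u^{(n)}$, multiplying by $(1+\alpha_n^2)^{1/2}$, and using the elementary inequality $(1+\alpha_n^2)^{1/2}ab\le \tfrac12\bigl((1+\alpha_n^2)a^2+b^2\bigr)$ with $a=\|u^{(n)}\|_{L^2(c,b)}$ and $b=\|\partial_y u^{(n)}\|_{L^2(c,b)}$, I obtain
\[
(1+\alpha_n^2)^{1/2}|u^{(n)}(b)|^2\lesssim \int_c^b\!\!\Bigl((1+\alpha_n^2)|u^{(n)}(y)|^2+|\partial_y u^{(n)}(y)|^2\Bigr){\rm d}y,
\]
where the implicit constant depends only on $b-c$ (through the cutoff) and is independent of $n$.

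Summing over $n$ and comparing with the Parseval identity gives
\[
\|u\|_{H^{1/2}(\Gamma)}^2=\Lambda\sum_n(1+\alpha_n^2)^{1/2}|u^{(n)}(b)|^2\lesssim \|u\|_{H^1(\widetilde\Omega)}^2\le \|u\|_{H^1(\Omega)}^2,
\]
which is the stated estimate on $\Gamma_b$. The bound on $\Gamma_{b'}$ is obtained by exactly the same argument, either by repeating the cutoff construction with $b'$ in place of $b$, or by choosing $\chi\in C^1([c,b])$ with $\chi(c)=0$ and $\chi(b')=1$ on $[c,b']$ and $\chi$ decaying smoothly afterwards.

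There is no serious obstacle here; the only point requiring a little care is the fact that $\Omega$ is not a rectangle, so the Fourier series representation is not valid on all of $\Omega$. This is handled cleanly by the reduction to the sub-rectangle $\widetilde\Omega$ above the maximum of $f$, after which the argument is the standard Fourier-side trace inequality coupled with Young's inequality to distribute the $(1+\alpha_n^2)^{1/2}$ weight between the $L^2$ and the derivative terms.
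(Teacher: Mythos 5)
Your proof is correct and is essentially the standard argument: the paper does not reproduce a proof but cites Chen--Wu \cite{CW-SINUM-2003}, where the same Fourier-side trace inequality is established, and indeed the paper itself carries out the identical one-dimensional cutoff-plus-Young computation on Fourier coefficients inside the proof of Lemma \ref{Posterior: Lemma5}. Your one genuinely careful point --- restricting to a rectangular strip above $\max_x f(x)$ before expanding in Fourier modes --- is exactly the right way to handle the non-rectangular domain.
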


Denote by $\boldsymbol{\xi}=\boldsymbol{u}-\boldsymbol{u}_N^h$ the error
between the solutions of \eqref{TBC: variational1} and \eqref{TBC:
variational3}. It can be verified that 
\begin{eqnarray}\label{Xi}
\vvvert\boldsymbol{\xi}\vvvert^2_{\boldsymbol{H}^{1}(\Omega) }
&=& \mu\int_{\Omega} 
\nabla\boldsymbol{\xi}:\nabla\overline{\boldsymbol{\xi}}{\rm d}\boldsymbol{x}
+(\lambda+\mu)\int_{\Omega}\left(\nabla\cdot\boldsymbol{\xi}\right)
\left(\nabla\cdot\overline{\boldsymbol{\xi}}\right){\rm d}\boldsymbol{x}
+\omega^2 \int_{\Omega} \boldsymbol{\xi}\cdot\overline{\boldsymbol{\xi}}{\rm
d}\boldsymbol{x}\notag\\
&=& \Re a(\boldsymbol{\xi}, \boldsymbol{\xi})+2\omega^2
\int_{\Omega}\boldsymbol{\xi}\cdot
\overline{\boldsymbol{\xi}}{\rm d}\boldsymbol{x}
+\Re\int_{\Gamma}\mathscr{T}\boldsymbol{\xi}\cdot
\overline{\boldsymbol{\xi}}{\rm d}s \notag\\
&=& \Re a(\boldsymbol{\xi}, \boldsymbol{\xi})+
\Re\int_{\Gamma}\left(\mathscr{T}
-\mathscr{T}_N\right)\boldsymbol{\xi}\cdot\overline{\boldsymbol{\xi}}{\rm d}s
+2\omega^2 \int_{\Omega}\boldsymbol{\xi}\cdot\overline{\boldsymbol{\xi}}
{\rm d}\boldsymbol{x} +\Re\int_{\Gamma}\mathscr{T}_N
\boldsymbol{\xi}\cdot\overline{\boldsymbol{\xi}}{\rm d}s.	
\end{eqnarray}

In the following, we shall discuss the four terms in the right hand side of
\eqref{Xi}. Lemma \ref{Posterior: Lemma3} gives the error estimate of the
truncated DtN operator. Lemma \ref{Posterior: FirstTwo} presents the a
posteriori error estimate for the finite element approximation and the
truncated DtN operator.

\begin{lemma}\label{Posterior: Lemma3}
Let $\boldsymbol{u}\in \boldsymbol{H}^{1}_{\rm qp}(\Omega)$ be the solution of
the variational problem \eqref{TBC: variational1}. For any $\boldsymbol{v}\in
\boldsymbol{H}^{1}_{\rm qp}(\Omega)$, the following estimate holds:
\[
\left|\int_{\Gamma}
\left(\mathscr{T}-\mathscr{T}_N\right)\boldsymbol{u}\cdot\overline{\boldsymbol{v
}}\,{\rm d}s\right|
\leq C\max\limits_{|n|>N}\left(|n| e^{{\rm
i}\beta_2^{n}(b-b')}\right)
\|\boldsymbol{u}^{\rm
inc}\|_{\boldsymbol{H}^{1}(\Omega)}
\|\boldsymbol{v}\|_{\boldsymbol{H}^{1}(\Omega)},
\]
where $C>0$ is a constant independent of $N$. 
\end{lemma}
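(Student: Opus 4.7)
The plan is to exploit the exponential decay of the evanescent Fourier modes of the Helmholtz potentials between the heights $b'$ and $b$. First I would apply Parseval's identity on $\Gamma$ to rewrite
\[
\int_{\Gamma}(\mathscr{T}-\mathscr{T}_N)\boldsymbol u\cdot\overline{\boldsymbol v}\,{\rm d}s = \Lambda\sum_{|n|>N}\bigl[M^{(n)}\boldsymbol u^{(n)}(b)\bigr]\cdot\overline{\boldsymbol v^{(n)}(b)},
\]
so the problem reduces to estimating each Fourier tail. For $N$ large enough, $|\alpha_n|>\kappa_2>\kappa_1$, whence $\beta_j^{(n)}={\rm i}|\beta_j^{(n)}|$ and $|e^{{\rm i}\beta_j^{(n)}(b-b')}|=e^{-|\beta_j^{(n)}|(b-b')}$.

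Next, I would use the Helmholtz decomposition $\boldsymbol u=\nabla\phi_1+{\bf curl}\,\phi_2$ together with the propagation identity $\phi_j^{(n)}(b)=e^{{\rm i}\beta_j^{(n)}(b-b')}\phi_j^{(n)}(b')$ that follows from \eqref{TBC:Helmholtz}. Taking Fourier coefficients of the decomposition gives $\boldsymbol u^{(n)}(y)=A^{(n)}\bigl(\phi_1^{(n)}(y),\phi_2^{(n)}(y)\bigr)^{\top}$ with
\[
A^{(n)}=\begin{pmatrix}{\rm i}\alpha_n & {\rm i}\beta_2^{(n)}\\ {\rm i}\beta_1^{(n)} & -{\rm i}\alpha_n\end{pmatrix},\qquad \det A^{(n)}=\chi_n,
\]
so that
\[
\boldsymbol u^{(n)}(b)=A^{(n)}\,{\rm diag}\bigl(e^{{\rm i}\beta_1^{(n)}(b-b')},e^{{\rm i}\beta_2^{(n)}(b-b')}\bigr)(A^{(n)})^{-1}\,\boldsymbol u^{(n)}(b').
\]
Because $\kappa_2>\kappa_1$, the diagonal matrix has operator norm $e^{-|\beta_2^{(n)}|(b-b')}$ for $|n|>N$. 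The non-resonance hypothesis $\kappa_j\neq|\alpha_n|$, combined with $\chi_n\to(\kappa_1^2+\kappa_2^2)/2$ as $|n|\to\infty$, provides a uniform lower bound on $|\chi_n|$ for $|n|>N$, so the polynomial growth of $A^{(n)}$, $(A^{(n)})^{-1}$ and $M^{(n)}$ is under control.

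Substituting the expression above into the Fourier sum, the entries of $M^{(n)}A^{(n)}\,{\rm diag}(\cdots)(A^{(n)})^{-1}$ take the form $|n|^p e^{-|\beta_2^{(n)}|(b-b')}$, after using the identity $\beta_1^{(n)}\beta_2^{(n)}=\chi_n-\alpha_n^2$ in the diagonal entries and noting that the off-diagonal entries are proportional to the difference $e^{{\rm i}\beta_1^{(n)}(b-b')}-e^{{\rm i}\beta_2^{(n)}(b-b')}$, which is itself dominated by $e^{-|\beta_2^{(n)}|(b-b')}$. I would then factor out $\max_{|n|>N}\bigl(|n|\,e^{{\rm i}\beta_2^{(n)}(b-b')}\bigr)$ and apply Cauchy--Schwarz to the remaining sum, recognising the two factors as the $H^{1/2}$ norms of $\boldsymbol u$ on $\Gamma'$ and of $\boldsymbol v$ on $\Gamma$ (any leftover polynomial weight is absorbed into $(1+\alpha_n^2)^{1/2}$). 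Lemma~\ref{Lemma2} then yields $\|\boldsymbol u\|_{\boldsymbol H^{1/2}(\Gamma')}\lesssim\|\boldsymbol u\|_{\boldsymbol H^{1}(\Omega)}$ and $\|\boldsymbol v\|_{\boldsymbol H^{1/2}(\Gamma)}\lesssim\|\boldsymbol v\|_{\boldsymbol H^{1}(\Omega)}$, and the stability estimate \eqref{uBabu} upgrades $\|\boldsymbol u\|_{\boldsymbol H^{1}(\Omega)}\lesssim\|\boldsymbol u^{\rm inc}\|_{\boldsymbol H^{1}(\Omega)}$, producing the claimed inequality.

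The main obstacle is the polynomial bookkeeping inside the product $M^{(n)}A^{(n)}\,{\rm diag}(\cdots)(A^{(n)})^{-1}$: a direct operator-norm estimate produces a spuriously large prefactor in $|n|$, and recovering the linear $|n|$-dependence stated in the lemma relies on the algebraic cancellations described above. Once those cancellations are written out explicitly, the rest of the argument is routine Cauchy--Schwarz combined with the trace bound of Lemma~\ref{Lemma2} and the a priori estimate \eqref{uBabu}.
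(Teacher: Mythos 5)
Your overall route is the same as the paper's: Parseval on $\Gamma$, the Helmholtz decomposition with the propagation identity $\phi_j^{(n)}(b)=e^{{\rm i}\beta_j^{(n)}(b-b')}\phi_j^{(n)}(b')$, the transfer matrix $P^{(n)}=\frac{1}{\chi_n}A^{(n)}\,{\rm diag}\bigl(e^{{\rm i}\beta_1^{(n)}(b-b')},e^{{\rm i}\beta_2^{(n)}(b-b')}\bigr)\,\widetilde{A}^{(n)}$, entrywise bounds, Cauchy--Schwarz, the trace estimates of Lemma \ref{Lemma2}, and the stability bound \eqref{uBabu}. However, there is a genuine gap exactly at the step you yourself flag as the main obstacle: the cancellations you describe are not strong enough to give the linear factor $|n|$. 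After using $\beta_1^{(n)}\beta_2^{(n)}=\chi_n-\alpha_n^2$, both the diagonal and off-diagonal entries of $P^{(n)}$ contain a coefficient of size $n^2$ (e.g.\ $\alpha_n^2/\chi_n$ or $\alpha_n\beta_2^{(n)}/\chi_n$, since $\chi_n\to(\kappa_1^2+\kappa_2^2)/2$ stays bounded) multiplying the difference $e^{{\rm i}\beta_1^{(n)}(b-b')}-e^{{\rm i}\beta_2^{(n)}(b-b')}$. Bounding that difference merely by $e^{-|\beta_2^{(n)}|(b-b')}$, as you propose, yields $|P^{(n)}_{ij}|\lesssim n^2e^{-|\beta_2^{(n)}|(b-b')}$, not $|n|e^{-|\beta_2^{(n)}|(b-b')}$.

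The missing ingredient is quantitative: since $|\beta_1^{(n)}|-|\beta_2^{(n)}|=\dfrac{\kappa_2^2-\kappa_1^2}{|\beta_1^{(n)}|+|\beta_2^{(n)}|}=O(|n|^{-1})$, the mean value theorem gives
\[
\bigl|e^{{\rm i}\beta_1^{(n)}(b-b')}-e^{{\rm i}\beta_2^{(n)}(b-b')}\bigr|
\leq (b-b')\bigl(|\beta_1^{(n)}|-|\beta_2^{(n)}|\bigr)e^{-|\beta_2^{(n)}|(b-b')}
\lesssim \frac{1}{|n|}\,e^{-|\beta_2^{(n)}|(b-b')},
\]
which is precisely the extra $1/|n|$ that turns the $n^2$ coefficient into the stated $|n|$; this is how the paper argues (its estimate of $\chi_nP^{(n)}_{11}$ with $\tau\in({\rm i}\beta_1^{(n)},{\rm i}\beta_2^{(n)})$). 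Without it your final step does not close: after multiplying by $M^{(n)}$ (norm $\sim|n|$) and factoring out $\max_{|n|>N}\bigl(|n|e^{-|\beta_2^{(n)}|(b-b')}\bigr)$, the leftover weight on $\boldsymbol u^{(n)}(b')$ would be of order $|n|^{3/2}$, which cannot be ``absorbed into $(1+\alpha_n^2)^{1/2}$'' because Lemma \ref{Lemma2} only controls the $H^{1/2}(\Gamma')$ trace, not $H^{3/2}$. You would either have to prove the sharper difference estimate above, or settle for a weaker bound with $\max_{|n|>N}\bigl(|n|^2e^{-|\beta_2^{(n)}|(b-b')}\bigr)$, which is not the lemma as stated. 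With the mean value theorem step added, your argument coincides with the paper's proof.
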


\begin{proof}
Using \eqref{uHelmholtz} and \eqref{TBC:Helmholtz} yields 
\[
\phi_j^{(n)}(b)=\phi^{(n)}_j(b')e^{{\rm i}\beta_j^{(n)}(b-b')}.
\]
It follows from the straightforward calculations that we obtain 
\begin{eqnarray}
\begin{bmatrix}
u_1^{(n)}(b) \\[5pt]
u_2^{(n)}(b)
\end{bmatrix}&=&\frac{1}{\chi_n}
\begin{bmatrix}
{\rm i}\alpha_n & {\rm i}\beta_2^{(n)}\\[5pt] 
{\rm i}\beta_1^{(n)} & -{\rm i}\alpha_n
\end{bmatrix}
\begin{bmatrix}
e^{{\rm i}\beta_1^{(n)}(b-b')} & 0\\[5pt] 
0 & e^{{\rm i}\beta_2^{(n)}(b-b')}
\end{bmatrix}
\begin{bmatrix}
-{\rm i}\alpha_n & -{\rm i}\beta_2^{(n)}\\[5pt] 
-{\rm i}\beta_1^{(n)} & {\rm i}\alpha_n
\end{bmatrix}
\begin{bmatrix}
u_1^{(n)}(b')\\[5pt] 
u_2^{(n)}(b')
\end{bmatrix} \notag\\
&:=& P^{(n)}
\begin{bmatrix}
u_1^{(n)}(b') \\[5pt] 
u_2^{(n)}(b')
\end{bmatrix},\label{PN}
\end{eqnarray}
where
\begin{eqnarray*}
P^{(n)}_{11} &=& \frac{1}{\chi_n}\left(\alpha_n^2 e^{{\rm
i}\beta_1^{(n)}(b-b')}+\beta_1^{(n)}\beta_2^{(n)}
e^{{\rm i}\beta_2^{(n)}(b-b')}\right),\\
P^{(n)}_{12} &=& \frac{\alpha_n \beta_2^{(n)}}{\chi_n}\left(e^{{\rm
i}\beta_1^{(n)}(b-b')}-e^{{\rm i}\beta_2^{(n)}(b-b')}\right),\\
P^{(n)}_{21} &=& \frac{\alpha_n \beta_1^{(n)}}{\chi_n}\left(e^{{\rm
i}\beta_1^{(n)}(b-b')}-e^{{\rm i}\beta_2^{(n)}(b-b')}\right),\\
P^{(n)}_{22} &=& \frac{1}{\chi_n}\left(\alpha_n^2 e^{{\rm
i}\beta_2^{(n)}(b-b')}+\beta_1^{(n)}\beta_2^{(n)}
e^{{\rm i}\beta_1^{(n)}(b-b')}\right).
\end{eqnarray*}

It is clear to note from \eqref{beta} that $\beta_j^{(n)}$ is purely imaginary
for sufficiently large $|n|$. By the mean value theorem, for sufficiently large
$|n|$, there exists  $\tau\in ({\rm i}\beta_1^{(n)}, {\rm i}\beta_2^{(n)})$ such
that
\begin{eqnarray*}
\chi_n P^{(n)}_{11} &=&
\left(\alpha_n^2+\beta_1^{(n)}\beta_2^{(n)}\right) e^{{\rm
i}\beta_1^{(n)}(b-b')}
+\beta_1^{(n)}\beta_2^{(n)}\left(e^{{\rm
i}\beta_2^{(n)}(b-b')}-e^{{\rm i}\beta_1^{(n)}(b-b')}\right),\\
&=& \left(\alpha_n^2+\beta_1^{(n)}\beta_2^{(n)}\right)
e^{{\rm i}\beta_1^{(n)}(b-b')}
+\beta_1^{(n)}\beta_2^{(n)}(b-b'){\rm
i}(\beta_2^{(n)}-\beta_1^{(n)})e^{\tau (b-b')}.
\end{eqnarray*}
A simple calculation yields 
\begin{eqnarray*}
\alpha_n^2+\beta_1^{(n)}\beta_2^{(n)}&=&
\alpha_n^2-(\alpha_n^2-\kappa_1^2)^{1/2}(\alpha_n^2-\kappa_2^2)^{1/2}\\
&=&\frac{\alpha_n^2\left(\kappa_1^2+\kappa_2^2\right)-\kappa_1^2\kappa_2^2}{
\alpha_n^2+(\alpha_n^2-\kappa_1^2)^{1/2}
(\alpha_n^2-\kappa_2^2)^{1/2}}<\kappa_1^2+\kappa_2^2
\end{eqnarray*}
and
\begin{eqnarray*}
{\rm i}\beta_2^{(n)}-{\rm
i}\beta_1^{(n)}&=&(\alpha_n^2-\kappa_1^2)^{1/2}-(\alpha_n^2-\kappa_2^2)^{1/2}\\
&=&\frac{\kappa_2^2-\kappa_1^2}{(\alpha_n^2-\kappa_1^2)^{1/2}+(
\alpha_n^2-\kappa_2^2)^{1/2}}
<\frac{\kappa_2^2-\kappa_1^2}{2(\alpha_n^2-\kappa_2^2)^{1/2}}.
\end{eqnarray*}
which give
\begin{eqnarray}\label{AsymMn}
|P^{(n)}_{11}|\lesssim e^{{\rm
i}\beta_1^{(n)}(b-b')}+|n|e^{\tau (b-b')}
\lesssim |n|e^{{\rm i}\beta_2^{(n)}(b-b')}.
\end{eqnarray}
Similarly, we may show that 
\[
|P^{(n)}_{ij}|\lesssim |n| e^{{\rm i}\beta_2^{(n)}(b-b')},\quad
i,j=1, 2.
\]
Combining the above estimates lead to
\[
|u_{1}^{(n)}(b)|^2+|u_{2}^{(n)}(b)|^2\lesssim 
n^2 e^{2{\rm i}\beta_2^{(n)}(b-b')}\left(|u_{1}^{(n)}(b')|^2+|u_{2}^{(n)}
(b')|^2\right).
\]

By \eqref{TBC: uTBC} and \eqref{TBC: tTBC}, we have from Lemma \ref{Lemma2}
that 
\begin{eqnarray*}
&&\left|\int_{\Gamma}
\left(\mathscr{T}-\mathscr{T}_N\right)\boldsymbol{u}\cdot\overline{\boldsymbol{v
}}{\rm d}s\right| =
\left|\Lambda\sum\limits_{|n|>N} (M^{(n)}\boldsymbol
u^{(n)}(b))\cdot\overline{\boldsymbol v^{(n)}(b)}
\right|\\
\quad&&\lesssim   \sum\limits_{|n|>N} \left|\left(|n|^{\frac{1}{2}}
\boldsymbol{u}^{(n)}(b)\right) \cdot
\left(|n|^{\frac{1}{2}}
\overline{\boldsymbol{v}^{(n)}(b)}\right)\right|\\
\quad&&\lesssim \left(\sum\limits_{|n|>N}
|n|\left(|u_{1}^{(n)}(b)|^2+|u_{2}^{(n)}(b)|^2\right)\right)^{1/2}
\left(\sum\limits_{|n|>N}|n|\left(|v_{1}^{(n)}(b)|^2+|v_{2}^{(n)}
(b)|^2\right)\right)^{1/2} \\
\quad&&\lesssim  \left(\sum\limits_{|n|>N} |n|^{3}e^{2{\rm
i}\beta_2^{(n)}(b-b')}\left(|u_{1}^{(n)}(b')|^2+|u_{2}^{(n)}
(b')|^2\right)\right)^{1/2} 
\|\boldsymbol{v}\|_{\boldsymbol{H}^{1/2}(\Gamma)} \\
\quad&&\lesssim \max\limits_{|n|>N}\left(|n| e^{{\rm
i}\beta_2^{(n)}(b-b')}\right)
\|\boldsymbol{u}\|_{\boldsymbol{H}^{1/2}(\Gamma_{b'})}
\|\boldsymbol{v}\|_{\boldsymbol{H}^{1/2}(\Gamma)}\\	 
\quad&&\lesssim \max\limits_{|n|>N}\left(|n| e^{{\rm
i}\beta_2^{(n)}(b-b')}\right)
\|\boldsymbol{u}\|_{\boldsymbol{H}^{1}(\Omega)}
\|\boldsymbol{v}\|_{\boldsymbol{H}^{1}(\Omega)}.
\end{eqnarray*}
Using \eqref{uBabu}, we get 
\[
\left|\int_{\Gamma}
\left(\mathscr{T}-\mathscr{T}_N\right)\boldsymbol{u}\cdot\overline{\boldsymbol{v
}}{\rm d}s\right|\lesssim \max\limits_{|n|>N}\left(|n| e^{{\rm
i}\beta_2^{(n)}(b-b')}\right)
\|\boldsymbol{u}^{\rm
inc}\|_{\boldsymbol{H}^{1}(\Omega)}
\|\boldsymbol{v}\|_{\boldsymbol{H}^{1}(\Omega)},
\]
which completes the proof. 
\end{proof}

In the following lemmas, the first two terms in \eqref{Xi} are estimated.

\begin{lemma}\label{Posterior: FirstTwo}
Let $\boldsymbol{v}$ be any function in
$\boldsymbol{H}^{1}_{S, {\rm qp}}(\Omega)$, the following estimate holds
\begin{equation*}
\left|a(\boldsymbol{\xi},
\boldsymbol{v})+\int_{\Gamma}\left(\mathscr{T}-\mathscr{T}_N\right)
\boldsymbol{\xi}\cdot \overline{\boldsymbol{v}}{\rm d}s 
\right|\lesssim
\left(\left(\sum\limits_{K\in \mathcal M_n}\eta_{K}^2\right)^{1/2} 
+\max\limits_{|n|>N}\left(|n| e^{{\rm
i}\beta_2^{(n)}(b-b')}\right)
\|\boldsymbol{u}^{\rm
inc}\|_{\boldsymbol{H}^{1}(\Omega)} \right)
\|\boldsymbol{v}\|_{\boldsymbol{H}^{1}(\Omega)}.
\end{equation*}
\end{lemma}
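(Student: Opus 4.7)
The plan is to reduce the estimate to the standard residual-based a posteriori machinery by combining the two variational identities \eqref{TBC: variational1} and \eqref{TBC: variational3} with Galerkin orthogonality, and then invoking Lemma \ref{Posterior: Lemma3} for the DtN truncation contribution. First I would observe that $a(\boldsymbol{u},\boldsymbol{v})=0$ since $\boldsymbol{v}\in \boldsymbol{H}^{1}_{S,{\rm qp}}(\Omega)$, and that the two sesquilinear forms differ only on $\Gamma$: $a(\boldsymbol{w},\boldsymbol{v})-a_N(\boldsymbol{w},\boldsymbol{v})=-\int_{\Gamma}(\mathscr{T}-\mathscr{T}_N)\boldsymbol{w}\cdot\overline{\boldsymbol{v}}\,{\rm d}s$ for any $\boldsymbol{w}$. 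A short algebraic rearrangement together with $a(\boldsymbol{u},\boldsymbol{v})=0$ then gives
\[
a(\boldsymbol{\xi},\boldsymbol{v})+\int_{\Gamma}(\mathscr{T}-\mathscr{T}_N)\boldsymbol{\xi}\cdot\overline{\boldsymbol{v}}\,{\rm d}s
= -a_N(\boldsymbol{u}_N^h,\boldsymbol{v}) + \int_{\Gamma}(\mathscr{T}-\mathscr{T}_N)\boldsymbol{u}\cdot\overline{\boldsymbol{v}}\,{\rm d}s,
\]
in which the last term is already controlled by Lemma \ref{Posterior: Lemma3} and produces the second summand of the claimed bound.

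It then remains to treat $-a_N(\boldsymbol{u}_N^h,\boldsymbol{v})$. I would introduce a Scott--Zhang or Cl\'{e}ment-type quasi-periodic interpolant $\boldsymbol{v}^h\in \boldsymbol{V}_{h,S}$ of $\boldsymbol{v}$ that preserves both the vanishing trace on $S$ and the quasi-periodic identification on $\{x=0\}\cup\{x=\Lambda\}$. The Galerkin orthogonality \eqref{TBC: variational3} gives $a_N(\boldsymbol{u}_N^h,\boldsymbol{v})=a_N(\boldsymbol{u}_N^h,\boldsymbol{v}-\boldsymbol{v}^h)$. I would then split the volume integrals as $\sum_{K\in\mathcal M_h}\int_K$ and integrate by parts on each $K$: the interior contributes $-\int_K \mathscr{R}\boldsymbol{u}_N^h\cdot\overline{(\boldsymbol{v}-\boldsymbol{v}^h)}\,{\rm d}\boldsymbol{x}$, while paired edge contributions on interior edges $e$ produce exactly the jumps $J_e$. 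On boundary edges $e\subset\Gamma$, the surface flux $\mathscr{B}\boldsymbol{u}_N^h$ arising from integration by parts combines with the DtN contribution $\mathscr{T}_N\boldsymbol{u}_N^h$ present in $a_N$ to yield $2(\mathscr{T}_N\boldsymbol{u}_N^h-\mathscr{B}\boldsymbol{u}_N^h)=J_e$. On the periodic sides $\{x=0\}$ and $\{x=\Lambda\}$, the quasi-periodicity of $\boldsymbol{v}-\boldsymbol{v}^h$ pairs the two opposite-oriented edges with a phase factor $e^{\pm{\rm i}\alpha\Lambda}$, reproducing precisely the $J_e$ and $J_{e'}$ prescribed at the start of Section \ref{section:pea}.

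The final step is routine: an element-by-element Cauchy--Schwarz inequality together with the standard interpolation bounds $\|\boldsymbol{v}-\boldsymbol{v}^h\|_{\boldsymbol{L}^2(K)}\lesssim h_K\|\boldsymbol{v}\|_{\boldsymbol{H}^1(\widetilde K)}$ and $\|\boldsymbol{v}-\boldsymbol{v}^h\|_{\boldsymbol{L}^2(e)}\lesssim h_e^{1/2}\|\boldsymbol{v}\|_{\boldsymbol{H}^1(\widetilde e)}$ on the usual Scott--Zhang patches, followed by a discrete Cauchy--Schwarz and the finite-overlap property of the patches, deliver the bound $\bigl(\sum_K\eta_K^2\bigr)^{1/2}\|\boldsymbol{v}\|_{\boldsymbol{H}^1(\Omega)}$. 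I expect the main technical difficulty to lie not in the volume estimate, which is entirely standard, but in two bookkeeping issues: constructing the quasi-periodic interpolant that is simultaneously conforming in $\boldsymbol{V}_{h,S}$ and compatible with the left-right vertex matching required by the mesh assumption, and verifying that integration by parts on the left and right boundary edges reproduces exactly the phase-shifted jump terms $J_e$ and $J_{e'}$. Once this is handled correctly, everything else reduces to classical residual-based a posteriori arguments.
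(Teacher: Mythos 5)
Your proposal is correct and follows essentially the same route as the paper: the identity $a(\boldsymbol{\xi},\boldsymbol{v})+\int_{\Gamma}(\mathscr{T}-\mathscr{T}_N)\boldsymbol{\xi}\cdot\overline{\boldsymbol{v}}\,{\rm d}s=-a_N(\boldsymbol{u}_N^h,\boldsymbol{v}-\boldsymbol{v}^h)+\int_{\Gamma}(\mathscr{T}-\mathscr{T}_N)\boldsymbol{u}\cdot\overline{\boldsymbol{v}}\,{\rm d}s$ obtained from the two variational problems and Galerkin orthogonality, Lemma \ref{Posterior: Lemma3} for the truncation term, and elementwise integration by parts with the Scott--Zhang interpolant yielding the residual and jump contributions (including the $\Gamma$ and quasi-periodic edge jumps) is exactly the paper's argument. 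No gaps.
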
 

\begin{proof}
For any function $\boldsymbol v\in H^1_{S, {\rm qp}}(\Omega)$, we have
\begin{eqnarray*}
&& a(\boldsymbol{\xi}, \boldsymbol{v})
+\int_{\Gamma}\left(\mathscr{T}-\mathscr{T}_N\right)\boldsymbol{\xi}\cdot
\overline{\boldsymbol{v}}{\rm d}s  =a(\boldsymbol{u},
\boldsymbol{v})-a(\boldsymbol{u}_N^{h}, \boldsymbol{v})
+\int_{\Gamma}\left(\mathscr{T}-\mathscr{T}_N\right)\boldsymbol{\xi}\cdot
\overline{\boldsymbol{v}}{\rm d}s \\
&&\qquad =a(\boldsymbol{u},
\boldsymbol{v})-a_N^{h}(\boldsymbol{u}_N^h, \boldsymbol{v})
+a_N^{h}(\boldsymbol{u}_N^h, \boldsymbol{v})
-a(\boldsymbol{u}_N^{h}, \boldsymbol{v})
+\int_{\Gamma}\left(\mathscr{T}-\mathscr{T}_N\right)\boldsymbol{\xi}\cdot
\overline{\boldsymbol{v}}{\rm d}s \\ 
&&\qquad =a(\boldsymbol{u},
\boldsymbol{v})-a_N^{h}(\boldsymbol{u}_N^h, \boldsymbol{v}^h)
-a_N^{h}(\boldsymbol{u}_N^h,
\boldsymbol{v}-\boldsymbol{v}^h)
+\int_{\Gamma}\left(\mathscr{T}-\mathscr{T}_N\right)\boldsymbol{u}_N^h\cdot
\overline{\boldsymbol{v}}{\rm d}s \\
&&\qquad\qquad
+\int_{\Gamma}\left(\mathscr{T}-\mathscr{T}_N\right)\boldsymbol{\xi}\cdot
\overline{\boldsymbol{v}}{\rm d}s \\
&&\qquad =-a_N^h(\boldsymbol{u}_N^h,
\boldsymbol{v}-\boldsymbol{v}^h)
+\int_{\Gamma}\left(\mathscr{T}-\mathscr{T}_N\right)\boldsymbol{u}\cdot
\overline{\boldsymbol{v}}{\rm d}s.
\end{eqnarray*}
For any function $\boldsymbol{v}\in \boldsymbol{H}_{S, {\rm qp}}^{1}(\Omega)$
and $\boldsymbol{v}^h\in\boldsymbol{V}_{h, S}$, it follows from the integration
by parts that 
\begin{eqnarray}\label{ftw-s1}
&& -a_N^h(\boldsymbol{u}_N^h, \boldsymbol{v}-\boldsymbol{v}^h) \notag\\
&=& -\sum\limits_{K\in\mathcal M_h}\left\{ \mu\int_{K}
\nabla\boldsymbol{u}_N^h:\nabla \big(\overline{\boldsymbol
v}-\overline{\boldsymbol{v}^h}\big){\rm d}\boldsymbol{x}
+(\lambda+\mu)\int_{K}
(\nabla\cdot\boldsymbol{u}_N^h)
\nabla\cdot\big(\overline{\boldsymbol v}-\overline{\boldsymbol{v}^h}\big)
{\rm d}\boldsymbol{x}\right\} \notag\\
&&-\sum\limits_{K\in\mathcal
M_h}\left\{-\omega^2\int_{K}\boldsymbol{u}_N^h\cdot 
\big(\overline{\boldsymbol v}-\overline{\boldsymbol{v}^h}\big){\rm
d}\boldsymbol{x} -\int_{\Gamma\cap \partial K}
\mathscr{T}\boldsymbol{u}_N^{h}\cdot\big(
\overline{\boldsymbol v}-\overline{\boldsymbol{v}^h}\big){\rm d}s\right\}
\notag\\
&=& \sum\limits_{K\in\mathcal M_h} \left\{ -\int_{\partial
K}\left[\mu\nabla\boldsymbol{u}_N^h\cdot\boldsymbol{\nu}
+(\lambda+\mu)(\nabla\cdot\boldsymbol{u}_N^h)\boldsymbol{\nu}\right]
\cdot\big(\overline{\boldsymbol v}-\overline{\boldsymbol{v}^h}\big)
{\rm d}\boldsymbol{x} +\int_{\Gamma\cap \partial
K}\mathscr{T}\boldsymbol{u}_N^h\cdot
\big(\overline{\boldsymbol v}-\overline{\boldsymbol{v}^h}\big){\rm
d}s\right\}\notag\\
&& +\sum\limits_{K\in\mathcal M_h}\int_{K}
\left[\mu\Delta\boldsymbol{u}_N^h
+(\lambda+\mu)\nabla\nabla\cdot\boldsymbol{u}_N^h+\omega^2\boldsymbol{u}
_N^h\right]\cdot \big(\overline{\boldsymbol
v}-\overline{\boldsymbol{v}^h}\big){\rm d}\boldsymbol{x} \notag\\
&=&  \sum\limits_{K\in\mathcal M_h}\left[\int_{K} \mathscr{R}\boldsymbol{u}_N^h
\cdot \big( \overline{\boldsymbol v}-\overline{\boldsymbol{v}^h}\big){\rm
d}\boldsymbol{x} +\sum\limits_{e\in \partial K}\frac{1}{2}\int_{e}J_e
\cdot \big( \overline{\boldsymbol v}-\overline{\boldsymbol{v}^h}\big){\rm d}s 
\right].
\end{eqnarray}
We take $\boldsymbol{v}^h=\Pi_h \boldsymbol{v}\in\boldsymbol V_{h, S}$, where
$\Pi_h$ is the Scott--Zhang interpolation operator and has the following
interpolation estimates 
\begin{eqnarray*}
\|\boldsymbol{v}-\Pi_h
\boldsymbol{v}\|_{\boldsymbol{L}^2(K)}\lesssim h_K
\|\nabla\boldsymbol{v}\|_{\boldsymbol{L}^2(\tilde{K})},\quad
\|\boldsymbol{v}-\Pi_h\boldsymbol{v}\|_{\boldsymbol{L}^2(e)}\lesssim
h_e^{1/2}\|\boldsymbol{v}\|_{\boldsymbol{H}^{1}(\tilde{K}_e)}.
\end{eqnarray*}
Here $\tilde{K}$ and $\tilde{K}_e$ are the unions of all the triangular elements
in $\mathcal M_h$, which have nonempty intersection with the element $K$
and the side $e$, respectively. By the H\"{o}lder equality, we get from
\eqref{ftw-s1} that
\begin{eqnarray*}
|a_N^h(\boldsymbol{u}_N^h, \boldsymbol{v}-\boldsymbol{v}^h)|
\lesssim \left(\sum\limits_{K\in
\mathcal M_h}\eta_K^2\right)^{1/2}\|\boldsymbol{v}\|_{\boldsymbol{H}^1(\Omega)},
\end{eqnarray*}	
which completes the proof.	
\end{proof}

\begin{lemma}\label{Lemma4}
Let $\hat{M}^{(n)} = -\frac{1}{2}(M^{(n)}+ (M^{(n)})^*)$, where $M^{(n)}$ is
defined in \eqref{TBCMatrix}. Then $\hat{M}^{(n)}$ is positive definite for
sufficiently large $|n|$. 
\end{lemma}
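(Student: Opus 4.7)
The plan is to exploit the fact that for sufficiently large $|n|$, the matrix $M^{(n)}$ itself turns out to be Hermitian, so that $\hat M^{(n)}$ reduces to $-M^{(n)}$, and then to verify Sylvester's criterion asymptotically. More precisely, for $|n|$ large we have $|\alpha_n|>\kappa_2>\kappa_1$, so \eqref{beta} gives $\beta_j^{(n)}={\rm i}\tilde\beta_j^{(n)}$ with $\tilde\beta_j^{(n)}=(\alpha_n^2-\kappa_j^2)^{1/2}>0$, and by the identity derived in the proof of Lemma \ref{Posterior: Lemma3} one has $\chi_n=\alpha_n^2-\tilde\beta_1^{(n)}\tilde\beta_2^{(n)}>0$ since the numerator $\alpha_n^2(\kappa_1^2+\kappa_2^2)-\kappa_1^2\kappa_2^2$ is positive once $|\alpha_n|\ge\kappa_2$.

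Plugging these into \eqref{TBCMatrix}, I would check entry by entry that the diagonal entries of $M^{(n)}$ become real, namely $M^{(n)}_{jj}=-\omega^2\tilde\beta_j^{(n)}/\chi_n$, while the off-diagonal entries are purely imaginary with $M^{(n)}_{12}={\rm i}c_n$ and $M^{(n)}_{21}=-{\rm i}c_n$, where $c_n:=\alpha_n(\mu\chi_n-\omega^2)/\chi_n\in\mathbb R$. Since $\overline{M^{(n)}_{21}}=\overline{-{\rm i}c_n}={\rm i}c_n=M^{(n)}_{12}$, we see $M^{(n)}$ is Hermitian, and therefore $\hat M^{(n)}=-M^{(n)}$. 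Positive definiteness then amounts, by Sylvester's criterion, to checking that the leading $(1,1)$ entry $\omega^2\tilde\beta_1^{(n)}/\chi_n$ is positive (immediate from $\chi_n>0$) and that
\[
\det\hat M^{(n)}=\frac{\omega^4\,\tilde\beta_1^{(n)}\tilde\beta_2^{(n)}-\alpha_n^2(\mu\chi_n-\omega^2)^2}{\chi_n^2}>0.
\]

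The main work, and the main obstacle, is showing this determinant is positive for large $|n|$. I would carry out an asymptotic analysis as $|n|\to\infty$: $\tilde\beta_1^{(n)}\tilde\beta_2^{(n)}\sim\alpha_n^2$, and from the representation of $\chi_n$ in Lemma \ref{Posterior: Lemma3} one gets $\chi_n\to (\kappa_1^2+\kappa_2^2)/2$. Using $\kappa_1^2=\omega^2/(\lambda+2\mu)$ and $\kappa_2^2=\omega^2/\mu$, a direct simplification gives
\[
\mu\chi_n-\omega^2\ \longrightarrow\ -\frac{\omega^2(\lambda+\mu)}{2(\lambda+2\mu)},
\]
so the numerator of the determinant behaves like $\omega^4\alpha_n^2\bigl(1-\bigl(\tfrac{\lambda+\mu}{2(\lambda+2\mu)}\bigr)^2\bigr)+O(1)$. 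The factor in parentheses is strictly positive iff $2(\lambda+2\mu)>\lambda+\mu$, i.e.\ $\lambda+3\mu>0$, which holds under the paper's standing assumption $\mu>0$, $\lambda+\mu>0$ since $\lambda+3\mu=(\lambda+\mu)+2\mu>0$.

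Combining these two checks shows $\hat M^{(n)}$ is Hermitian with both leading principal minors positive for all sufficiently large $|n|$, establishing positive definiteness. I expect the delicate point to be making the asymptotic estimates uniform enough to declare a precise threshold; however, since we only need the qualitative conclusion of positive definiteness for large $|n|$, it suffices to note that both the $(1,1)$ entry and the determinant are continuous functions of $n$ whose limiting expressions are strictly positive, so the conclusion follows.
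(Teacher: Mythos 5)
Your proof is correct, and its skeleton coincides with the paper's: for $|n|$ large both $\beta_j^{(n)}$ are purely imaginary, so $M^{(n)}$ is Hermitian and $\hat M^{(n)}=-M^{(n)}$, and positive definiteness is checked through the two leading principal minors, with $\hat M^{(n)}_{11}=\omega^2(\alpha_n^2-\kappa_1^2)^{1/2}/\chi_n>0$ following from $\chi_n>0$ exactly as you argue. Where you genuinely diverge is in the determinant. The paper substitutes $\omega^2=\mu\kappa_2^2$ and $\beta_1^{(n)}\beta_2^{(n)}=\chi_n-\alpha_n^2$ to obtain the exact factorization $\chi_n^2\det\hat M^{(n)}=\mu^2\chi_n\bigl(\alpha_n^2(2\kappa_2^2-\chi_n)-\kappa_2^4\bigr)$, and then shows algebraically that $2\kappa_2^2-\chi_n>0$ whenever $|\alpha_n|>\kappa_2$, so positivity follows from $\alpha_n^2\sim n^2$. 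You instead expand asymptotically: $\chi_n\to(\kappa_1^2+\kappa_2^2)/2$, $\mu\chi_n-\omega^2\to-\omega^2(\lambda+\mu)/\bigl(2(\lambda+2\mu)\bigr)$, and the leading coefficient $1-\bigl(\tfrac{\lambda+\mu}{2(\lambda+2\mu)}\bigr)^2$ is positive because $\lambda+3\mu=(\lambda+\mu)+2\mu>0$ (in fact the ratio is below $1/2$, so no borderline case arises); a quick check confirms your leading term agrees with the paper's, since $(\kappa_1^2+\kappa_2^2)(3\kappa_2^2-\kappa_1^2)/4=\kappa_2^4\bigl(1-(1-\mu/(\lambda+2\mu))^2/4\bigr)$. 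The paper's route buys an exact inequality with a transparent threshold ($|\alpha_n|>\kappa_2$ plus $\alpha_n^2(2\kappa_2^2-\chi_n)>\kappa_2^4$), while yours makes the dependence on the Lam\'e parameters explicit and is arguably more systematic; both only claim the conclusion for sufficiently large $|n|$, so the asymptotic form of your argument is adequate. One cosmetic caveat: your closing appeal to ``continuity in $n$ with strictly positive limits'' is loose -- $n$ is a discrete index and both the $(1,1)$ entry and the determinant in fact diverge to $+\infty$ -- but this costs nothing, since your asymptotic expansion already establishes eventual positivity.
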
	

\begin{proof}
It follows from \eqref{beta} that $\beta_j^{(n)}$ is purely imaginary for
sufficiently large $|n|$. By \eqref{TBCMatrix}, we have 
\begin{eqnarray*}
\hat{M}^{(n)} = -\frac{1}{\chi_n}\begin{bmatrix}
{\rm i}\omega^2\beta_1^{(n)} & {\rm
i}\left(\mu\alpha_n\chi_n-\omega^2\alpha_n\right)\\
{\rm i}\left(\omega^2\alpha_n-\mu\alpha_n\chi_n\right) & {\rm
i}\omega^2\beta_2^{(n)}
\end{bmatrix}.
\end{eqnarray*}
Since
$\chi_n=\alpha_n^2-(\alpha_n^2-\kappa_1^2)^{1/2}(\alpha_n^2-\kappa_2^2)^{1/2}
>0$, we get
\[
\hat{M}^{(n)}_{11}=-\frac{\rm i}{\chi_n}\omega^2\beta_1^{(n)}
=\frac{\omega^2}{\chi_n} (\alpha_n^2-\kappa_1^2)^{1/2}>0.
\]
 A simple calculation yields that
\begin{eqnarray*}
\chi_n^2 \det \hat{M}^{(n)} &=&-\omega^4 \beta_1^{(n)}\beta_2^{(n)}
-\left(\mu\alpha_n\chi_n-\omega^2\alpha_n\right)^2\\
&=&-\mu^2\kappa_2^4\left(\chi_n-\alpha_n^2\right)
-\mu^2\alpha_n^2\left(\chi_n-\kappa_2^2\right)^2\\
&=& \mu^2\chi_n
\left(-\kappa_2^4-\alpha_n^2\chi_n+2\alpha_n^2\kappa_2^2\right).
\end{eqnarray*}
Since $\kappa_2>\kappa_1$ and $\alpha^2_n$ has an order of $n^2$ for
sufficiently large $|n|$, we obtain  
\begin{eqnarray*}
2\kappa_2^2-\chi_n
&=&2\kappa_2^2-\alpha_n^2+(\alpha_n^2-\kappa_2^2)^{1/2}
(\alpha_n^2-\kappa_1^2)^{1/2}\\
&=&\kappa_2^2+(\alpha_n^2-\kappa_2^2)^{1/2}\big((\alpha_n^2-\kappa_1^2)^{1/2}
-(\alpha_n^2-\kappa_2^2)^{1/2}\big)>0,
\end{eqnarray*}
which gives that $\det\hat{M}^{(n)}>0$ and completes the proof.
\end{proof}

\begin{lemma}\label{Posterior: Lemma5}
Let $\Omega'=\{\boldsymbol x\in\mathbb R^2: b'<y<b,\,0<x<\Lambda\}$. Then
for any $\delta>0$, there exists a positive constant $C(\delta)$ independent of
$N$ such that 
\begin{equation*}
\Re\int_{\Gamma}\mathscr{T}_N\boldsymbol{\xi}\cdot
\overline{ \boldsymbol{\xi}}{\rm d}s \leq C(\delta)
\|\boldsymbol{\xi}\|^2_{\boldsymbol{L}^2(\Omega')}
+\delta\|\boldsymbol{\xi}\|^2_{\boldsymbol{H}^{1}(\Omega')}.
\end{equation*}
\end{lemma}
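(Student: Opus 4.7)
The plan is to exploit Lemma \ref{Lemma4} to reduce the estimate to a finite Fourier cut-off, and then close with an $L^2$--$H^1$ trace--interpolation inequality on the strip $\Omega'$.

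First I would expand the boundary integral in the Fourier basis on $\Gamma$. Writing $\boldsymbol{\xi}(x,b)=\sum_{n}\boldsymbol{\xi}^{(n)}(b)e^{\mathrm{i}\alpha_n x}$, Parseval gives
\[
\Re\int_{\Gamma}\mathscr{T}_N\boldsymbol{\xi}\cdot\overline{\boldsymbol{\xi}}\,{\rm d}s
=\Lambda\sum_{|n|\le N}\Re\!\left(M^{(n)}\boldsymbol{\xi}^{(n)}(b)\cdot\overline{\boldsymbol{\xi}^{(n)}(b)}\right)
=-\Lambda\sum_{|n|\le N}\hat{M}^{(n)}\boldsymbol{\xi}^{(n)}(b)\cdot\overline{\boldsymbol{\xi}^{(n)}(b)},
\]
where the last equality uses the identity $\Re(Ax\cdot\bar x)=\tfrac12(A+A^*)x\cdot\bar x$ together with the definition $\hat M^{(n)}=-\tfrac12(M^{(n)}+(M^{(n)})^*)$.

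Next, by Lemma \ref{Lemma4} there exists an integer $N_0$, independent of $N$, such that $\hat{M}^{(n)}$ is positive definite for all $|n|>N_0$. Hence every term in the sum with $|n|>N_0$ is non-positive and can be discarded:
\[
\Re\int_{\Gamma}\mathscr{T}_N\boldsymbol{\xi}\cdot\overline{\boldsymbol{\xi}}\,{\rm d}s
\le -\Lambda\sum_{|n|\le N_0}\hat{M}^{(n)}\boldsymbol{\xi}^{(n)}(b)\cdot\overline{\boldsymbol{\xi}^{(n)}(b)}
\le C_{0}\sum_{|n|\le N_0}\bigl|\boldsymbol{\xi}^{(n)}(b)\bigr|^2,
\]
with $C_0$ depending only on the finitely many matrices $M^{(-N_0)},\dots,M^{(N_0)}$, hence independent of $N$. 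Enlarging the sum to all $n\in\mathbb{Z}$ and applying Parseval converts the right-hand side into $C_0\Lambda^{-1}\|\boldsymbol{\xi}\|^2_{\boldsymbol{L}^2(\Gamma)}$.

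It remains to pass from $\boldsymbol{L}^2(\Gamma)$ to the $\Omega'$ norms on the right-hand side of the lemma. Here I would invoke the one-dimensional identity
\[
|\boldsymbol{\xi}(x,b)|^2=|\boldsymbol{\xi}(x,y)|^2+2\Re\int_{y}^{b}\boldsymbol{\xi}(x,s)\cdot\overline{\partial_s\boldsymbol{\xi}(x,s)}\,{\rm d}s,
\]
valid for smooth $\boldsymbol{\xi}$ and extended by density. Averaging over $y\in(b',b)$, integrating over $x\in(0,\Lambda)$, and bounding the cross term by Cauchy--Schwarz yields, for any $\varepsilon>0$,
\[
\|\boldsymbol{\xi}\|^2_{\boldsymbol{L}^2(\Gamma)}\le \frac{1}{b-b'}\|\boldsymbol{\xi}\|^2_{\boldsymbol{L}^2(\Omega')}
+\frac{1}{\varepsilon}\|\boldsymbol{\xi}\|^2_{\boldsymbol{L}^2(\Omega')}+\varepsilon\|\nabla\boldsymbol{\xi}\|^2_{\boldsymbol{L}^2(\Omega')}.
\]
Choosing $\varepsilon$ so that $C_0\Lambda^{-1}\varepsilon\le\delta$ produces the claimed bound with $C(\delta)$ independent of $N$.

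The only real obstacle is the trace--interpolation step: a direct application of the continuous trace inequality $\|\boldsymbol{\xi}\|_{\boldsymbol{L}^2(\Gamma)}\lesssim\|\boldsymbol{\xi}\|_{\boldsymbol{H}^1(\Omega')}$ would give the wrong scaling (no arbitrarily small $\delta$), so the sharper $L^2$--$H^1$ interpolation on the slab $\Omega'$ above must be used; everything else is a finite-dimensional bookkeeping argument whose constants are manifestly $N$-independent thanks to Lemma \ref{Lemma4}.
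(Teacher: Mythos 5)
Your proposal is correct and follows essentially the same route as the paper: Parseval on $\Gamma$, Lemma \ref{Lemma4} to discard the high-order modes where $\hat M^{(n)}$ is positive definite, a uniform bound on the finitely many remaining modes, and an $L^2$--$H^1$ interpolation-type trace inequality on the slab $\Omega'$ with an arbitrarily small coefficient on the gradient term. The only cosmetic difference is that you derive the trace inequality directly in physical space by averaging over $y\in(b',b)$, while the paper applies the same one-dimensional Young-inequality argument to each Fourier coefficient and then sums.
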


\begin{proof}
Using \eqref{TBC: tTBC}, we get from a simple calculation that
\[
\Re\int_{\Gamma}\mathscr{T}_N\boldsymbol{\xi}\cdot \overline{
\boldsymbol{\xi}}{\rm d}s =\Lambda\sum\limits_{|n|\leq
N}\Re\left(M^{(n)}\boldsymbol{\xi}^{(n)}\right)\cdot
\overline{\boldsymbol{\xi}^{(n)}}=-\Lambda\sum\limits_{|n|\leq
N}\left(\hat{M}^{(n)}\boldsymbol{\xi}^{(n)}\right)
\cdot\overline{\boldsymbol{\xi}^{(n)}}. 
\]
By Lemma \ref{Lemma4}, $\hat{M}^{(n)}$ is positive definite for sufficiently
large $|n|$. Hence, for fixed $\omega, \lambda, \mu$, there exists $N^{*}$ such
that $-\left(\hat{M}^{(n)}\boldsymbol{\xi}^{(n)}\right)\cdot\overline{
\boldsymbol{\xi}^{(n)}}\leq 0$ for $n>N^{*}$. Correspondingly, we split
$\Re\int_{\Gamma}\mathscr{T}_N\boldsymbol{\xi}\cdot \overline{
\boldsymbol{\xi}}{\rm d}s$ into two parts:
\begin{equation}\label{Posterior: Lemma5-s1}
\Re\int_{\Gamma}\mathscr{T}_N\boldsymbol{\xi}\cdot
\overline{ \boldsymbol{\xi}}{\rm d}s
=-\Lambda\sum\limits_{|n|\leq \min(N^{*},
N)}\left(\hat{M}^{(n)}\boldsymbol{\xi}_n\right)\cdot\overline{\boldsymbol{\xi}_n
}-\Lambda\sum\limits_{N>|n|>\min(N^{*},
N)}\left(\hat{M}^{(n)}\boldsymbol{\xi}_n\right)\cdot\overline{\boldsymbol{\xi}_n
},
\end{equation}
where $\sum\limits_{N>|n|>\min(N^{*},
N)}\left(\hat{M}^{(n)}\boldsymbol{\xi}_n\right)
\cdot\overline{\boldsymbol{\xi}_n}=0$ if $N>N^{*}$. Since the second part in the
right hand side of \eqref{Posterior: Lemma5-s1} is non-positive, we only need to
estimate the first part in the right hand side of \eqref{Posterior: Lemma5-s1},
which has finitely many terms. Hence there exists 
a constant $C$ depending only on $\omega,\mu, \lambda$ such
that $|\left(\hat{M}^{(n)} \boldsymbol{\xi}^{(n)}\right)\cdot 
\overline{\boldsymbol{\xi}^{(n)}}|\leq C|\boldsymbol{\xi}^{(n)}|^2$ for all
$|n|\leq \min(N^{*}, N)$. 

For any $\delta>0$, it follows from Yong's inequality that 
\begin{eqnarray*}
&& \left(b-b'\right)|\phi(b)|^2 = \int_{b'}^{b}
|\phi(y)|^2{\rm d}y+\int_{b'}^{b}\int_{y}^{b}\left(|\phi(s)|^2\right)'{\rm ds}{\rm d}y\\
&&\qquad \leq\int_{b'}^{b}|\phi(y)|^2{\rm d}y+(b-b')\int_{b'}^{b}2|\phi(y)||\phi'(y)|{\rm d}y
\\
&&\qquad =\int_{b'}^{b}|\phi(y)|^2{\rm d}y+(b-b')\int_{b'}^{b}2\frac{|\phi(y)|}{\sqrt{\delta}}\sqrt{\delta}
|\phi'(y)|{\rm d}y\\
&&\qquad \leq
\int_{b'}^{b}|\phi(y)|^2{\rm d}y+\frac{b-b'}{\delta}\int_{b'}^{b}|\phi(y)|^2{\rm d}y
+\delta(b-b')\int_{b'}^{b}|\phi'(y)|^2{\rm d}y,
\end{eqnarray*}
which gives
\[
|\phi(b)|^2\leq
\left[\frac{1}{\delta}+(b-b')^{-1}\right]\int_{b'}^{b}|\phi(y)|^2{\rm d}y
+\delta\int_{b'}^{b}|\phi^\prime(y)|^2{\rm d}y.
\]
Let $\phi(x,y)=\sum\limits_{n\in\mathbb{Z}}\phi_n(y)e^{{\rm i}\alpha_n
x}$. A simple calculation yields that 
\begin{eqnarray*}
\|\nabla \phi\|^2_{\boldsymbol{L}^2(\Omega')} &=&
\Lambda\sum\limits_{n\in\mathbb{Z}}\int_{b'}^{b}
\left(|\phi'_n(y)|^2+\alpha_n^2|\phi_n(y)|^2\right){\rm d}y,\\
\|\phi\|_{L^{2}(\Omega')}^2  &=&
\Lambda\sum\limits_{n\in\mathbb{Z}}\int_{b'}^{b}|\phi_n(y)|^2{\rm d}y.	
\end{eqnarray*}
Using the above estimates, we have for any $\phi\in H^{1}(\Omega')$ that 
\begin{eqnarray*}
&& \|\phi\|^2_{L^2(\Gamma)} =
\Lambda\sum\limits_{n\in\mathbb{Z}}|\phi_n(b)|^2 \\
&&\,\leq
\Lambda\left[\frac{1}{\delta}+(b-b')^{-1}\right]\sum\limits_{n\in\mathbb{Z}}
\int_{b'}^{b}|\phi_n(y)|^2{\rm d}y
+\Lambda\delta\sum\limits_{n\in\mathbb{Z}}\int_{b'}^{b}|\phi'(y)|^2{\rm d}y\\
&&\,\leq  \Lambda
\left[\frac{1}{\delta}+(b-b')^{-1}\right]
\sum\limits_{n\in\mathbb{Z}}\int_{b'}^{b}
|\phi_n(y)|^2{\rm d}y
+\Lambda
\delta\sum\limits_{n\in\mathbb{Z}}\int_{b'}^{b}\left(|\phi'_n(y)|^2
+\alpha_n^2|\phi_n(y)|^2\right){\rm d}y\\
&&\,\leq 
\left[\frac{1}{\delta}+(b-b')^{-1}\right]\|\phi\|^2_{L^2(\Omega')}
+\delta\|\nabla\phi\|^2_{L^2(\Omega)}\\
&&\, \leq C(\delta)
\|\phi\|^2_{L^2(\Omega')}+\delta\|\nabla \phi\|^2_{L^2(\Omega')}.	
\end{eqnarray*}	
Combining the above estimates, we obtain 
\begin{eqnarray*}
{\rm Re\,}\int_{\Gamma}\mathscr{T}_N\boldsymbol{\xi}\cdot
\overline{ \boldsymbol{\xi}}{\rm d}s 
&\leq&
C\|\boldsymbol{\xi}\|^2_{\boldsymbol{L}^2(\Gamma)}
\leq C(\delta) 
\|\boldsymbol{\xi}\|^2_{\boldsymbol{L}^2(\Omega')}
+\delta\int_{\Omega'}|\nabla\boldsymbol{\xi}|^2{\rm d}\boldsymbol{x}\\
&\leq& C(\delta)
\|\boldsymbol{\xi}\|^2_{\boldsymbol{L}^2(\Omega')}
+\delta\|\boldsymbol{\xi}\|^2_{\boldsymbol{H}^{1}(\Omega')},
\end{eqnarray*}
which completes the proof. 
\end{proof}

To estimate $\int_{\Omega}|\boldsymbol{\xi}|^2{\rm d}\boldsymbol{x}$
in \eqref{Xi} , 
we introduce the dual problem 
\begin{equation}\label{DualProblem}
a(\boldsymbol{v},
\boldsymbol{p})=\int_{\Omega}\boldsymbol{v}\cdot\overline{\boldsymbol{\xi}}
{\rm d}\boldsymbol{x}, \quad \forall \boldsymbol{v}\in \boldsymbol{H}^{1}_{S, {\rm qp}}(\Omega).
\end{equation}
It can be verified that $\boldsymbol{p}$ is the weak solution of the boundary value problem
\begin{equation}\label{dp}
\begin{cases}
\mu\Delta
\boldsymbol{p}+(\lambda+\mu)\nabla\nabla\cdot\boldsymbol{p}+\omega^2\boldsymbol{
p}=-\boldsymbol{\xi} & {\rm in}\,\Omega,\\
 \boldsymbol{p}=0 & {\rm on}\,S,\\
 \mathscr{B}\boldsymbol{p}=\mathscr{T}^{*}\boldsymbol{p} & {\rm on}\,\Gamma,
\end{cases}
\end{equation}
where $\mathscr{T}^{*}$ is the adjoint operator to the DtN operator
$\mathscr{T}$.

It requires to explicitly solve the boundary value problem \eqref{dp}. We consider the Helmholtz decomposition and let
\begin{equation}\label{hdxi}
\boldsymbol{\xi}=\nabla\zeta_1+{\bf curl}\zeta_2,
\end{equation}
where $\zeta_j, j=1, 2$ has the Fourier series expansion
\[
\zeta_j(x,y)=\sum\limits_{n\in \mathbb{Z}}\zeta^{(n)}_j(y) e^{{\rm i}\alpha_n x},\quad b'<y<b.
\]
Consider the following coupled first order ordinary different equations  
\begin{equation*}
\left\{
\begin{aligned}
& \xi^{(n)}_{1}(y)={\rm i}\,\alpha_n
\zeta_1^{(n)}(y)+\zeta_2^{(n)}{}^\prime(y), \\
& \xi^{(n)}_{2}(y)=\zeta_1^{(n)}{}^\prime(y)-{\rm
i}\alpha_n\zeta_2^{(n)}(y),\\
& \zeta_1^{(n)}(b)=0,\quad \zeta_2^{(n)}(b)=0.
\end{aligned}
\right.
\end{equation*}
It follows from straightforward calculations that the solution is
\begin{eqnarray*}
\zeta_1^{(n)}(y) &=& -\frac{\rm
i}{2}e^{\alpha_n(y-b)}\int_{y}^{b}e^{-\alpha_n(t-b)}\xi_n^{(1)}(t){\rm d}t
+\frac{\rm i}{2}e^{-\alpha_n(y-b)}\int_{y}^{b}e^{\alpha_n
(t-b)}\xi_n^{(1)}(t){\rm d}t\\
&&-\frac{1}{2}e^{\alpha_n(y-b)}\int_{y}^{b}e^{-\alpha_n(t-b)}\xi_n^{(2)}
(t){\rm d}t
-\frac{1}{2}e^{-\alpha_n(y-b)}\int_{y}^{b}e^{\alpha_n
(t-b)}\xi_n^{(2)}(t){\rm d}t,\\
\zeta_2^{(n)}(y) &=&
-\frac{1}{2}e^{\alpha_n(y-b)}\int_{y}^{b}e^{-\alpha_n(t-b)}\xi_n^{(1)}(t){\rm d}t
-\frac{1}{2}e^{-\alpha_n(y-b)}\int_{y}^{b}e^{\alpha_n
(t-b)}\xi_n^{(1)}(t){\rm d}t\\
&&+\frac{\rm i}{2}e^{\alpha_n(y-b)}\int_{y}^{b}e^{-\alpha_n(t-b)}\xi_n^{(2)}(t){\rm d}t
-\frac{\rm i}{2}e^{-\alpha_n(y-b)}\int_{y}^{b}e^{\alpha_n
(t-b)}\xi_n^{(2)}(t){\rm d}t.
\end{eqnarray*}	
It is easy to verify the following estimate
\begin{eqnarray*}
\left|\zeta_j^{(n)}(y)\right|\lesssim \left(\|\xi_1^{(n)}\|_{L^{\infty}(b',
b)}+\|\xi_2^{(n)}\|_{L^{\infty}(b', b)}\right)
\frac{1}{|\alpha_n|}e^{|\alpha_n|(b-y)},\quad j=1, 2. 	
\end{eqnarray*}

Let $\boldsymbol p$ be the solution of the dual problem \eqref{dp}. Then it satisfies the following boundary value problem
\begin{equation}\label{tpb}
\begin{cases}
\mu\Delta\boldsymbol{p}+(\lambda+\mu)\nabla\nabla\cdot\boldsymbol{p}
+\omega^2\boldsymbol{p}
=-\boldsymbol{\xi} & \quad {\rm
in}\,\Omega',\\
 \boldsymbol{p}(x, b')=\boldsymbol{p}(x, b') & \quad {\rm
on}\,\Gamma'\\
\mathscr{B}\boldsymbol{p}=\mathscr{T}^{*}\boldsymbol{p} & \quad {\rm on}\,\Gamma.
\end{cases}
\end{equation}
Let function $q_j, j=1, 2$ have the Fourier expansion in $\Omega'$:
\[
q_j(x,y)=\sum\limits_{n\in\mathbb{Z}} q_j^{(n)}(y)e^{{\rm
i}\alpha_n x}.
\]
The Fourier coefficients $q_j^{(n)}$ are required to satisfy the two point
boundary value problem
\begin{equation}\label{Dual_pq}
 \begin{cases}
  q_j^{(n)''}(y)+(\kappa_j^2-\alpha_n^2)q_j^{(n)}(y)=-c_j \zeta_j^{(n)}(y),\\
  q_j^{(n)}(b')=q_j^{(n)}(b'),\\
   q_j^{(n)'}(b)=-{\rm i}\overline{\beta_j^{(n)}}q_j^{(n)}(b),
 \end{cases}
\end{equation}
where $c_1=(\lambda+2\mu)^{-1}$ and $c_2=\mu^{-1}$, $\zeta_j^{(n)}$
are the Fourier coefficients of the potential functions $\zeta_j$
for the Helmholtz decomposition of $\boldsymbol\xi$
in \eqref{hdxi}.

\begin{lemma}\label{Lemma7}
Let $\boldsymbol{p}=\nabla q_1+{\bf curl}q_2$. Then $\boldsymbol p$ satisfies
\eqref{tpb}.
\end{lemma}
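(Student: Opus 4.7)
The plan is to verify each of the three conditions in \eqref{tpb} separately, exploiting that the Helmholtz decomposition $\boldsymbol p=\nabla q_1+{\bf curl}\,q_2$ decouples the Navier operator into a pair of scalar Helmholtz operators acting on $q_1$ and $q_2$. The only delicate step is the transparent boundary condition at $\Gamma$, which requires identifying a $2\times 2$ matrix with $(M^{(n)})^*$.

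First I would treat the interior equation. Using $\nabla\cdot{\bf curl}\equiv 0$ one gets $\nabla\cdot\boldsymbol p=\Delta q_1$ and $\Delta\boldsymbol p=\nabla\Delta q_1+{\bf curl}\,\Delta q_2$, so that
\[
\mu\Delta\boldsymbol p+(\lambda+\mu)\nabla\nabla\cdot\boldsymbol p+\omega^2\boldsymbol p=(\lambda+2\mu)\nabla\!\left(\Delta q_1+\kappa_1^2 q_1\right)+\mu\,{\bf curl}\!\left(\Delta q_2+\kappa_2^2 q_2\right).
\]
The ODE in \eqref{Dual_pq}, written out in the Fourier variable, is exactly $\Delta q_j+\kappa_j^2 q_j=-c_j\zeta_j$ with $c_1=(\lambda+2\mu)^{-1}$ and $c_2=\mu^{-1}$, so the prefactors cancel and the right-hand side becomes $-\nabla\zeta_1-{\bf curl}\,\zeta_2=-\boldsymbol\xi$, as required. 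The $\Gamma'$ condition then follows directly from the Dirichlet data built into \eqref{Dual_pq}, so this step is essentially tautological.

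The main work is the TBC on $\Gamma$. I would expand $\boldsymbol p$ and $\mathscr B\boldsymbol p$ in Fourier series and, using $q_j^{(n)\prime}(b)=-{\rm i}\,\overline{\beta_j^{(n)}}q_j^{(n)}(b)$ from the third line of \eqref{Dual_pq}, express the $n$-th Fourier coefficient $\boldsymbol p^{(n)}(b)$ as a $2\times 2$ matrix (call it $A^{(n)}$) acting on $(q_1^{(n)}(b),q_2^{(n)}(b))^\top$. The definition of $\mathscr B$ introduces second-order normal derivatives of $q_j$ at $y=b$; these I would eliminate by using the ODE at $y=b$ together with $\zeta_j^{(n)}(b)=0$ (which is built into the construction of the $\zeta_j$ through the terminal conditions $\zeta_j^{(n)}(b)=0$), so that $q_j^{(n)\prime\prime}(b)=(\alpha_n^2-\kappa_j^2)q_j^{(n)}(b)$. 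This gives a second $2\times 2$ matrix $B^{(n)}$ with $(\mathscr B\boldsymbol p)^{(n)}(b)=B^{(n)}(q_1^{(n)}(b),q_2^{(n)}(b))^\top$. Eliminating $(q_1^{(n)}(b),q_2^{(n)}(b))^\top$ between the two gives $(\mathscr B\boldsymbol p)^{(n)}(b)=B^{(n)}(A^{(n)})^{-1}\boldsymbol p^{(n)}(b)$.

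The remaining hurdle, and the main obstacle, is the algebraic identification $B^{(n)}(A^{(n)})^{-1}=(M^{(n)})^*$. This is a routine but delicate $2\times 2$ computation: I would invert $A^{(n)}$ explicitly (its determinant is essentially $-\chi_n$ up to a factor of ${\rm i}$ with $\overline{\beta_j^{(n)}}$ in place of $\beta_j^{(n)}$), multiply out, and compare entry by entry with the Hermitian transpose of the $M^{(n)}$ given in \eqref{TBCMatrix}. The sign and conjugation bookkeeping is the only subtle point: in the propagating modes $\overline{\beta_j^{(n)}}=\beta_j^{(n)}$ while in the evanescent modes $\overline{\beta_j^{(n)}}=-\beta_j^{(n)}$, and these two cases must collapse into the single identity $(\kappa_j^2-\alpha_n^2)=(\beta_j^{(n)})^2=\overline{\beta_j^{(n)}}\beta_j^{(n)}\cdot(\pm 1)$. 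Once this identity is folded into the product, the off-diagonal entries $\mu\alpha_n\chi_n-\omega^2\alpha_n$ and its conjugate of \eqref{TBCMatrix} appear in the expected transposed positions, completing the verification of $\mathscr B\boldsymbol p=\mathscr T^*\boldsymbol p$ on $\Gamma$.
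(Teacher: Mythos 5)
Your proposal is correct and follows essentially the same route as the paper: verify the interior Navier equation componentwise from the scalar Helmholtz equations $\Delta q_j+\kappa_j^2 q_j=-c_j\zeta_j$, then check the boundary condition on $\Gamma$ mode by mode, eliminating $q_j^{(n)\prime}(b)$ and $q_j^{(n)\prime\prime}(b)$ via $q_j^{(n)\prime}(b)=-{\rm i}\,\overline{\beta_j^{(n)}}q_j^{(n)}(b)$ and, using $\zeta_j^{(n)}(b)=0$, $q_j^{(n)\prime\prime}(b)=(\alpha_n^2-\kappa_j^2)q_j^{(n)}(b)$. The only cosmetic difference is that the paper never inverts your matrix $A^{(n)}$ (whose determinant is in fact $\overline{\chi}_n$, not $-\chi_n$ up to a factor of ${\rm i}$): it verifies the equivalent identity $(M^{(n)})^{*}A^{(n)}=B^{(n)}$ by direct multiplication, which amounts to the same entrywise bookkeeping you describe.
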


\begin{proof}
If \eqref{Dual_pq} holds, then it is easy to check that
\[
(\lambda+2\mu)\left(\Delta q_1+\kappa_1^2
q_1\right)=-\zeta_1,\quad
\mu\left(\Delta q_2+\kappa_2^2 q_2\right)=-\zeta_2.
\]
Noting $\boldsymbol{p}=\nabla q_1+{\bf curl}q_2$, we obtain 
\begin{eqnarray*}
&&
\mu\Delta\boldsymbol{p}+(\lambda+\mu)\nabla\nabla\cdot\boldsymbol{p}
+\omega^2\boldsymbol{p}\\
&&= \mu\nabla\left(\Delta q_1\right)+\mu{\bf
curl}\Delta q_2+(\lambda+\mu)\nabla\Delta q_1
+\omega^2\nabla q_1+\omega^2{\bf curl}q_2\\
&&=(\lambda+2\mu)\nabla\left(\Delta q_1+\kappa_1^2 q_1\right)
+\mu{\bf curl}\left(\Delta q_2+\kappa_2^2
q_2\right)\\
&&=-\nabla\zeta_1-{\bf curl}\zeta_2=-\boldsymbol{\xi}.		
\end{eqnarray*}

Next is to verify that the boundary condition on $y=b$. Assume that $\boldsymbol
p$ admits the Fourier expansion
$\boldsymbol{p}=\sum\limits_{n\in\mathbb{Z}}(p_{1}^{(n)}(y),
p_{2}^{(n)}(y))^\top e^{{\rm i}\alpha_n x}$. It follows from the
Helmholtz decomposition that 
\[
\begin{bmatrix}
p_1^{(n)}(y) \\[5pt] 
p_2^{(n)}(y)
\end{bmatrix}=
\begin{bmatrix}
{\rm i}\alpha_n q_{1}^{(n)}(y)+q^{(n)'}_2(y)\\[5pt]
q^{(n)'}_1(y)-{\rm i}\alpha_n q^{(n)}_2(y)
\end{bmatrix},
\]
which gives 
\[
 \begin{bmatrix}
p_1^{(n)'}(y) \\[5pt] 
p_2^{(n)'}(y)
\end{bmatrix}=
\begin{bmatrix}
{\rm i}\alpha_n q^{(n)'}_1(y)+ q^{(n)''}_2(y)\\[5pt] 
q^{(n)''}_1(y)-{\rm i}\alpha_n q^{(n)'}_2(y)
\end{bmatrix}.
\]

A straightforward calculation yields that 
\begin{eqnarray*}
\mathscr B\boldsymbol{p} &=&
\mu\partial_y\boldsymbol{p}+(\lambda+\mu)(0,1)^\top \nabla\cdot\boldsymbol{p}\\
&=&\sum\limits_{n\in\mathbb{Z}} \begin{bmatrix}
\mu\left({\rm i}\alpha_n
q^{(n)'}_1(y)+q^{(n)''}_2(y)\right)\\
(\lambda+\mu){\rm i}\alpha_n\left({\rm
i}\alpha_n q^{(n)}_1(y)+q^{(n)'}_2(y)\right)
+(\lambda+2\mu)\left(q^{(n)''}_1(y)-{\rm
i}\alpha_n q^{(n)'}_2(y)\right)
\end{bmatrix}e^{{\rm i}\alpha_n x}\\
&=&\sum\limits_{n\in\mathbb{Z}} \begin{bmatrix}
\mu\left({\rm i}\alpha_n
q^{(n)'}_1(y)+q^{(n)''}_2(y)\right)\\
(\lambda+2\mu)
q^{(n)''}_1(y)-(\lambda+\mu)\alpha_n^2 q^{(n)}_1(y)-{\rm i}\mu\alpha_n
q^{(n)'}_2(y)
\end{bmatrix}e^{{\rm i}\alpha_n x}.
\end{eqnarray*}
Evaluating the above equations at $y=b$, we get 
\begin{equation*}
\mathscr B\boldsymbol{p}|_{y=b}=\sum\limits_{n\in\mathbb{Z}}
\begin{bmatrix}
{\rm i}\mu\alpha_n q^{(n)'}_1(b)+\mu q^{(n)''}_2(b)\\[5pt]
(\lambda+2\mu)q^{(n)''}_1(b)-(\lambda+\mu)\alpha_n^2 q^{(n)}_1(b)-{\rm
i}\mu\alpha_n q^{(n)'}_2(b)
\end{bmatrix}e^{{\rm i}\alpha_n x}.
\end{equation*}
Noting $\zeta_j^{(n)}(b)=0$, we have from \eqref{Dual_pq} that 
$q^{(n)''}_j(b)=-(\kappa_j^2-\alpha_n^2)q^{(n)}_j(b)$. Hence
\begin{eqnarray*}
\mathscr B\boldsymbol{p}|_{y=b}=\sum\limits_{n\in\mathbb{Z}}\begin{bmatrix}
\mu\alpha_n\overline{\beta_1^{(n)}} &
-\omega^2+\mu\alpha_n^2\\[5pt]
\mu\alpha_n^2-\omega^2 &
-\mu\alpha_n\overline{\beta_2^{(n)}}
\end{bmatrix}\begin{bmatrix}
q^{(n)}_1(b) \\[5pt]  
q^{(n)}_2(b)
\end{bmatrix}e^{{\rm i}\alpha_n x}.
\end{eqnarray*}

On the other hand, we have 
\begin{eqnarray*}
\mathscr{T}^{*}\boldsymbol{p} &=&
\sum\limits_{n\in\mathbb{Z}}(M^{(n)})^* \boldsymbol{p}^{(n)}(b)e^{{\rm
i}\alpha_n x}\\
&=& \sum\limits_{n\in\mathbb{Z}} -\frac{\rm
i}{\overline{\chi}_n}\begin{bmatrix}
\omega^2\overline{\beta_1^{(n)}} &
\omega^2\alpha_n-\mu\alpha_n\overline{\chi}_n\\[5pt]
\mu\alpha_n\overline{\chi}_n-\omega^2\alpha_n & \omega^2\overline{\beta_2^{(n)}}
\end{bmatrix}\boldsymbol{p}^{(n)}(b)e^{{\rm
i}\alpha_n x}\\
&=&\sum\limits_{n\in\mathbb{Z}} -\frac{\rm
i}{\overline{\chi}_n}\begin{bmatrix}
\omega^2\overline{\beta_{1}^{(n)}} &
\omega^2\alpha_n-\mu\alpha_n\overline{\chi}_n\\[5pt]
\mu\alpha_n\overline{\chi}_n-\omega^2\alpha_n & \omega^2\overline{\beta_2^{(n)}}
\end{bmatrix}\begin{bmatrix}
{\rm i}\alpha_n & -{\rm
i}\overline{\beta_2^{(n)}}\\[5pt]
-{\rm i}\overline{\beta_1^{(n)}} &
-{\rm i}\alpha_n
\end{bmatrix}\begin{bmatrix}
q^{(n)}_1(b) \\[5pt]  
q^{(n)}_2(b)
\end{bmatrix}e^{{\rm i}\alpha_n x}\\
&=&\sum\limits_{n\in\mathbb{Z}}\begin{bmatrix}
\mu\alpha_n\overline{\beta_1^{(n)}} &
-\omega^2+\mu\alpha_n^2\\[5pt]
\mu\alpha_n^2-\omega^2 &
-\mu\alpha_n\overline{\beta_2^{(n)}}
\end{bmatrix}\begin{bmatrix}
q^{(n)}_1(b) \\[5pt]  
q^{(n)}_2(b)
\end{bmatrix}e^{{\rm i}\alpha_n x},
\end{eqnarray*}
which shows $\mathscr B\boldsymbol{p}=\mathscr{T}^{*}\boldsymbol{p}$ and 
completes the proof.
\end{proof}

It follows from the classic theory of second order differential equations
that the solution of the system
\[
\begin{cases}
q^{(n)''}_j(y)-|\beta_j^{(n)}|^2
q^{(n)}_j(y)=-c_j\zeta_j^{(n)}(y),\\
q^{(n)}_j(b')=q^{(n)}_j(b'),\\
q^{(n)'}_j(b)=-|\beta_j^{(n)}| q^{(n)}_j(b)
\end{cases}
\]
is 
\begin{eqnarray}\label{qjn}
q^{(n)}_j(y) = \frac{1}{2|\beta_j^{(n)}|}\Bigg\{
-c_j\int_{b}^{y}e^{|\beta_j^{(n)}|(y-s)}\zeta_j^{(n)}(s){\rm d}s
+c_j\int_{b'}^{y}e^{|\beta_j^{(n)}|(s-y)}\zeta_j^{(n)}(s){\rm d}s\notag\\
-c_j\int_{b'}^{b}e^{|\beta_j^{(n)}|(2b'-y-s)}\zeta_j^{(n)}(s){\rm d}s
+2|\beta_j^{(n)}|e^{|\beta_j^{(n)}|(b'-y)} q^{(n)}_j(b')
\Bigg\}.
\end{eqnarray}

\begin{lemma}\label{Lemma8}
Let $\boldsymbol{p}=(p_1, p_2)^\top$ be the solution of the dual problem problem
\eqref{DualProblem}. For sufficiently large $|n|$, the following estimate
hold
\begin{eqnarray*}
\left|p_j^{(n)}(b)\right|\lesssim |n|e^{|\beta_2^{(n)}|(b'-b)}\left(|p_1^{(n)}(b')|+|p_2^{(n)}(b')|\right)
+\frac{1}{|n|}\left(\|\xi_1^{(n)}\|_{L^{\infty}(b',
b)}+\|\xi_2^{(n)}\|_{L^{\infty}(b', b)}\right),
\end{eqnarray*}	
where $p_j^{(n)}$ is the Fourier coefficient of $p_j, j=1, 2$.
\end{lemma}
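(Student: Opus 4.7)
The plan is to evaluate the explicit formula \eqref{qjn} at $y=b$ and then convert between $q_j^{(n)}$ and $p_j^{(n)}$ using the Helmholtz decomposition at the two boundaries $y=b$ and $y=b'$. For $|n|$ sufficiently large, $\beta_j^{(n)}$ is purely imaginary and the adjoint DtN boundary condition simplifies to $q_j^{(n)'}(b) = -|\beta_j^{(n)}|q_j^{(n)}(b)$, so in Fourier the decomposition $\boldsymbol{p}=\nabla q_1+{\bf curl}q_2$ yields $p_1^{(n)}(b) = \mathrm{i}\alpha_n q_1^{(n)}(b) - |\beta_2^{(n)}| q_2^{(n)}(b)$ and $p_2^{(n)}(b) = -|\beta_1^{(n)}| q_1^{(n)}(b) - \mathrm{i}\alpha_n q_2^{(n)}(b)$.

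Plugging \eqref{qjn} into these identities splits $p_j^{(n)}(b)$ into two contributions: source integrals involving $\zeta_j^{(n)}$, and a propagator term of the form $\mathrm{i}\alpha_n E_1 q_1^{(n)}(b') - |\beta_2^{(n)}| E_2 q_2^{(n)}(b')$ with $E_j := e^{|\beta_j^{(n)}|(b'-b)}$ (and an analogous expression for $p_2^{(n)}(b)$). The source integrals would be handled via the already-established bound $|\zeta_j^{(n)}(y)| \lesssim \frac{1}{|\alpha_n|}e^{|\alpha_n|(b-y)}(\|\xi_1^{(n)}\|_{L^\infty}+\|\xi_2^{(n)}\|_{L^\infty})$ combined with the key asymptotic $|\alpha_n|-|\beta_j^{(n)}| = \kappa_j^2/(|\alpha_n|+|\beta_j^{(n)}|) = O(1/|n|)$; the two exponential weights inside each source integral almost cancel, leaving a factor of order $1/|n|$ and producing the desired $\frac{1}{|n|}(\|\xi_1^{(n)}\|_{L^\infty}+\|\xi_2^{(n)}\|_{L^\infty})$ contribution to $|p_j^{(n)}(b)|$.

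The decisive step is the propagator. I would rewrite
\[
\mathrm{i}\alpha_n E_1 q_1^{(n)}(b') - |\beta_2^{(n)}| E_2 q_2^{(n)}(b') = E_2\bigl(\mathrm{i}\alpha_n q_1^{(n)}(b') - |\beta_2^{(n)}| q_2^{(n)}(b')\bigr) + \mathrm{i}\alpha_n(E_1 - E_2) q_1^{(n)}(b').
\]
The first bracket matches $p_1^{(n)}(b')$ up to a bounded source correction, obtained from $p_1^{(n)}(b') = \mathrm{i}\alpha_n q_1^{(n)}(b') + q_2^{(n)'}(b')$ and the explicit formula for $q_2^{(n)'}(b')$ that falls out of \eqref{qjn}. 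For the second summand, the mean value theorem together with $|\beta_1^{(n)}|-|\beta_2^{(n)}| = (\kappa_2^2-\kappa_1^2)/(|\beta_1^{(n)}|+|\beta_2^{(n)}|) = O(1/|n|)$ gives $|E_1-E_2| \lesssim E_2/|n|$, so combined with $|\alpha_n| = O(|n|)$ this term is at most $E_2|q_1^{(n)}(b')|$. Inverting the Helmholtz decomposition matrix at $y=b'$, whose entries are of order $|n|$ and whose determinant $\alpha_n^2 - |\beta_1^{(n)}||\beta_2^{(n)}|$ is bounded (and nonzero), provides $|q_1^{(n)}(b')| \lesssim |n|(|p_1^{(n)}(b')|+|p_2^{(n)}(b')|) + O(\|\xi\|)$, producing the claimed $|n|E_2(|p_1^{(n)}(b')|+|p_2^{(n)}(b')|)$ rate. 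The parallel argument handles $p_2^{(n)}(b)$.

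The main obstacle is precisely the cancellation $E_1 - E_2 = O(E_2/|n|)$: without exploiting it, composing the matrix inversion at $b'$ (which costs one power of $|n|$) with the naive factor $|\alpha_n| E_1 \sim |n| E_2$ from the propagator would yield $|n|^2 E_2$, overshooting the claimed $|n| E_2$. Making this cancellation quantitative requires the explicit asymptotic expansion of $|\beta_1^{(n)}|-|\beta_2^{(n)}|$; everything else in the argument is a careful bookkeeping of the source estimates and the matrix inversion at $y=b'$.
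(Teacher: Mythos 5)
Your proposal is correct and follows essentially the same route as the paper's proof: evaluate the explicit formula \eqref{qjn}, convert between $q_j^{(n)}$ and $p_j^{(n)}$ via the Helmholtz decomposition at $y=b$ and $y=b'$, bound the source terms with the $\zeta_j^{(n)}$ estimate, and obtain the crucial $|n|$ (rather than $n^2$) rate from the mean-value-theorem cancellations $|\beta_1^{(n)}|-|\beta_2^{(n)}|=O(1/|n|)$ and $|\alpha_n|-|\beta_j^{(n)}|=O(1/|n|)$. The only difference is bookkeeping: the paper packages the composed transfer map as the matrix $P^{(n)}$ of \eqref{PN} and reuses the bound $|P^{(n)}_{ij}|\lesssim |n|e^{-|\beta_2^{(n)}|(b-b')}$ already established in the proof of Lemma \ref{Posterior: Lemma3}, whereas you re-derive the same cancellation componentwise through the splitting $\mathrm{i}\alpha_n E_1 q_1^{(n)}(b')-|\beta_2^{(n)}|E_2 q_2^{(n)}(b')=E_2\bigl(\mathrm{i}\alpha_n q_1^{(n)}(b')-|\beta_2^{(n)}|q_2^{(n)}(b')\bigr)+\mathrm{i}\alpha_n(E_1-E_2)q_1^{(n)}(b')$.
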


\begin{proof}
Evaluating \eqref{qjn} at $y=b$ yields 
\begin{eqnarray}\label{Lemma8-s1}
q^{(n)}_j(b)=\frac{1}{2|\beta_j^{(n)}|}\Bigg\{
c_j\int_{b'}^{b}e^{|\beta_j^{(n)}|(s-b)}\zeta_j^{(n)}(s){\rm d}s
-c_j\int_{b'}^{b}e^{|\beta_j^{(n)}|(2b'-b-s)}\zeta_j^{(n)}(s){\rm d}s\notag\\
+2|\beta_j^{(n)}|e^{|\beta_j^{(n)}|(b'-b)} q^{(n)}_j(b')
\Bigg\}.
\end{eqnarray}
Taking the derivative of $q^{(n)}_j$ with respect to $y$ in \eqref{qjn} and then
evaluating at $y=b'$, we have 
\begin{eqnarray*}
q^{(n)'}_j(b')=c_j\int_{b'}^{b} 
e^{|\beta_j^{(n)}|(b'-s)}\zeta_j^{(n)}(s){\rm d}s
-|\beta_j^{(n)}| q_{1}^{(n)}(b'),\quad j=1, 2,
\end{eqnarray*}
which is equivalent to
\begin{equation*}
\begin{bmatrix}
q^{(n)'}_1(b') \\[5pt]
q^{(n)'}_2(b')
\end{bmatrix}=
\begin{bmatrix}
-|\beta_1^{(n)}| & 0\\[5pt]
0 & -|\beta_2^{(n)}|
\end{bmatrix}
\begin{bmatrix}
q^{(n)}_1(b') \\[5pt]  
q^{(n)}_2(b')
\end{bmatrix}+
\begin{bmatrix}
\hat{\zeta}^{(n)}_1 \\[5pt]  
\hat{\zeta}^{(n)}_2
\end{bmatrix},
\end{equation*}
where
\[
 \hat{\zeta}^{(n)}_j=c_j\int_{b'}^{b}e^{|\beta_j^{(n)}|(b'-s)}\zeta_j^{(n)}(s){
\rm d}s. 
\]

It follows from Lemma \ref{Lemma7} and the Helmholtz
decomposition $\boldsymbol{p}=\nabla q_1+ {\bf curl}q_2$ that 
\begin{equation*}
\begin{bmatrix}
p_1^{(n)}(b') \\[5pt] 
p_2^{(n)}(b')
\end{bmatrix}=\begin{bmatrix}
{\rm i}\alpha_n q^{(n)}_1(b')+q^{(n)'}_2(b') \\[5pt]
q^{(n)'}_1(b')-{\rm i}\alpha_n  q^{(n)}_2(b')
\end{bmatrix}=\begin{bmatrix}
{\rm i}\alpha_n & -|\beta_2^{(n)}|\\[5pt]
-|\beta_1^{(n)}| & -{\rm i}\alpha_n
\end{bmatrix}\begin{bmatrix}
q^{(n)}_1(b') \\[5pt]  
q^{(n)}_2(b')
\end{bmatrix}+
\begin{bmatrix}
\hat{\zeta}_2^{(n)} \\[5pt]
\hat{\zeta}_1^{(n)}
\end{bmatrix},
\end{equation*}
which gives 
\begin{equation*}
\begin{bmatrix}
q^{(n)}_1(b') \\[5pt]
q^{(n)}_2(b')
\end{bmatrix}=\frac{1}{\chi_n}
\begin{bmatrix}
-{\rm i}\alpha_n & |\beta_2^{(n)}|\\[5pt]
|\beta_1^{(n)}| & {\rm i}\alpha_n
\end{bmatrix}\begin{bmatrix}
p_1^{(n)}(b') \\[5pt]
p_2^{(n)}(b')
\end{bmatrix}-\frac{1}{\chi_n}
\begin{bmatrix}
-{\rm i}\alpha_n & |\beta_2^{(n)}|\\
|\beta_1^{(n)}| & {\rm i}\alpha_n
\end{bmatrix}
\begin{bmatrix}
\hat{\zeta}_2^{(n)} \\[5pt] 
\hat{\zeta}_1^{(n)}
\end{bmatrix}.
\end{equation*}

Substituting the boundary condition 
\begin{equation*}
\begin{bmatrix}
q^{(n)'}_1 (b) \\[5pt]  
q^{(n)'}_2 (b)
\end{bmatrix}=
\begin{bmatrix}
-|\beta_1^{(n)}| & 0\\[5pt]
0 & -|\beta_2^{(n)}|
\end{bmatrix}\begin{bmatrix}
q^{(n)}_1(b) \\[5pt]  
q^{(n)}_2(b)
\end{bmatrix}
\end{equation*}
into the Helmholtz decomposition $\boldsymbol{p}=\nabla q_1+ {\bf curl}q_2$,
i.e., 
\[
\begin{bmatrix}
p_1^{(n)}(b) \\[5pt] 
p_2^{(n)}(b)
\end{bmatrix}=\begin{bmatrix}
{\rm i}\alpha_n q^{(n)}_1(b)+q^{(n)'}_2(b) \\[5pt]
q^{(n)'}_1(b)-{\rm i}\alpha_n  q^{(n)}_2(b)
\end{bmatrix},
\]
we obtain 
\begin{eqnarray*}
\begin{bmatrix}
p_1^{(n)}(b) \\[5pt] 
p_2^{(n)}(b)
\end{bmatrix}=\begin{bmatrix}
{\rm i}\alpha_n & -|\beta_2^{(n)}|\\[5pt]
-|\beta_1^{(n)}| & -{\rm i}\alpha_n
\end{bmatrix}\begin{bmatrix}
q^{(n)}_1(b) \\[5pt]  
q^{(n)}_2(b)
\end{bmatrix}.
\end{eqnarray*}
By \eqref{Lemma8-s1},
\begin{equation*}
\begin{bmatrix}
q^{(n)}_1(b) \\[5pt]
q^{(n)}_2(b)
\end{bmatrix}=\begin{bmatrix}
e^{|\beta_1^{(n)}|(b'-b)} & 0\\[5pt]
0 & e^{|\beta_2^{(n)}|(b'-b)}
\end{bmatrix}\begin{bmatrix}
q^{(n)}_1(b') \\[5pt]  
q^{(n)}_2(b')
\end{bmatrix}+\begin{bmatrix}
\eta_1^{(n)} \\[5pt] 
\eta_2^{(n)}
\end{bmatrix},
\end{equation*}
where
\[
 \eta_j^{(n)}=\frac{c_j}{2|\beta_j^{(n)}|}
\int_{b'}^{b}\left(e^{|\beta_j^{(n)}|(s-b)}-e^{|\beta_j^{(n)}|(2b'-b-s)}
\right)\zeta_j^{(n)}(s){\rm d}s. 
\]
Combining the above equations leads to 
\begin{eqnarray*}
\begin{bmatrix}
p_1^{(n)}(b) \\[5pt] 
p_2^{(n)}(b)
\end{bmatrix}&=&
\begin{bmatrix}
{\rm i}\alpha_n & -|\beta_2^{(n)}|\\[5pt]
-|\beta_1^{(n)}| & -{\rm i}\alpha_n
\end{bmatrix}\begin{bmatrix}
e^{|\beta_1^{(n)}|(b'-b)} & 0\\[5pt]
0 & e^{|\beta_2^{(n)}|(b'-b)}
\end{bmatrix}\begin{bmatrix}
q^{(n)}_1(b') \\[5pt]  
q^{(n)}_2(b')
\end{bmatrix}\\
&&+\begin{bmatrix}
{\rm i}\alpha_n & -|\beta_2^{(n)}|\\[5pt]
-|\beta_1^{(n)}| & -{\rm i}\alpha_n
\end{bmatrix}\begin{bmatrix}
\eta_1^{(n)} \\[5pt] 
\eta_2^{(n)}
\end{bmatrix}\\
&=&P^{(n)}\begin{bmatrix}
p_1^{(n)}(b') \\[5pt] 
p_2^{(n)}(b')
\end{bmatrix}
-P^{(n)}\begin{bmatrix}
\hat{\zeta}_{2}^{(n)} \\[5pt] 
\hat{\zeta}_{1}^{(n)}
\end{bmatrix}+\begin{bmatrix}
{\rm i}\alpha_n & -|\beta_2^{(n)}|\\[5pt]
-|\beta_1^{(n)}| & -{\rm i}\alpha_n
\end{bmatrix}\begin{bmatrix}
\eta_1^{(n)} \\[5pt] 
\eta_2^{(n)}
\end{bmatrix},
\end{eqnarray*}
where $P^{(n)}$ is defined in \eqref{PN}.

Recall that 
\begin{eqnarray*}
|\zeta_j^{(n)}(s)|\lesssim
\frac{1}{|\alpha_n|}\left(\|\xi_1^{(n)}\|_{L^{\infty}(b',
b)}+\|\xi_2^{(n)}\|_{L^{\infty}(b', b)}\right)
e^{|\alpha_n|(b-s)}.
\end{eqnarray*}
Since $s-b\geq 2b'-b-s$ and $|\alpha_n|\sim |n|, |\beta_j^{(n)}|\sim |n|$ for sufficiently large
$|n|$, we have from \eqref{AsymMn} and the mean-value theorem that 
\begin{eqnarray*}
|\eta^{(n)}_j| &\lesssim&
\left(\|\xi_1^{(n)}\|_{L^{\infty}(b', b)}+\|\xi_2^{(n)}\|_{L^{\infty}(b',
b)}\right)
\frac{1}{|\beta_j^{(n)}|}\left|\int_{b'}^{b}e^{|\beta_j^{(n)}|(s-b)}\frac{1}{
|\alpha_n|}e^{|\alpha_n|(b-s)}{\rm d}s\right|\\
&=& \left(\|\xi_1^{(n)}\|_{L^{\infty}(b',
b)}+\|\xi_2^{(n)}\|_{L^{\infty}(b', b)}\right)
\frac{1}{|\alpha_n| |\beta_j^{(n)}|}\frac{-1}{|\alpha_n|-|\beta_j^{(n)}|}
\left(1-e^{\left(|\alpha_n|-|\beta_j^{(n)}|\right)(b-b')}
\right)\\
&\lesssim& \frac{1}{n^2}\left(\|\xi_1^{(n)}\|_{L^{\infty}(b',
b)}+\|\xi_2^{(n)}\|_{L^{\infty}(b', b)}\right).
\end{eqnarray*}
Combining the above estimates yields 
\begin{eqnarray*}
\left|{\rm i}\,\alpha_n\eta_1^{(n)}-|\beta_2^{(n)}|\eta_2^{(n)}\right|, \quad 
\left|-|\beta_1^{(n)}|\eta_1^{(n)}-{\rm i}\,\alpha_n\eta_2^{(n)}\right|\lesssim \frac{1}{|n|}
\left(\|\xi_1^{(n)}\|_{L^{\infty}(b',
b)}+\|\xi_2^{(n)}\|_{L^{\infty}(b', b)}\right).
\end{eqnarray*}

Following the similar steps of the estimate for $\eta_j^{(n)}$, we can show that 
\begin{eqnarray*}
|\hat{\zeta}_j^{(n)}| &\lesssim& 
\left(\|\xi_1^{(n)}\|_{L^{\infty}(b',
b)}+\|\xi_2^{(n)}\|_{L^{\infty}(b', b)}\right)
\int_{b'}^{b}e^{|\beta_j^{(n)}|(b'-s)}e^{|\alpha_n|
(b-s)}\frac{1}{|\alpha_n|}{\rm d}s\\
&\lesssim& \frac{1}{|\alpha_n|(|\alpha_n|+|\beta_j^{(n)}|)}
\left(\|\xi_1^{(n)}\|_{L^{\infty}(b',
b)}+\|\xi_2^{(n)}\|_{L^{\infty}(b', b)}\right)
\left|e^{|\beta_j^{(n)}|(b'-b)}-e^{|\alpha_n| (b-b')}\right|\\
&\lesssim& \frac{1}{n^2}\left(\|\xi_1^{(n)}\|_{L^{\infty}(b',
b)}+\|\xi_2^{(n)}\|_{L^{\infty}(b', b)}\right)
e^{|\alpha_n| (b-b')},
\end{eqnarray*}
which gives
\begin{eqnarray*}
\left|P^{(n)} \begin{bmatrix}
\hat{\zeta}_1^{(n)} \\[5pt]
\hat{\zeta}_2^{(n)}
\end{bmatrix}\right| 
&\lesssim&  |n|e^{-|\beta_2^{(n)}|(b-b')}\frac{1}{n^2}
\left(\|\xi_1^{(n)}\|_{L^{\infty}(b',
b)}+\|\xi_2^{(n)}\|_{L^{\infty}(b', b)}\right)
e^{|\alpha_n| (b-b')}\\
&\lesssim& \frac{1}{|n|}e^{\left(|\alpha_n|-|\beta_2^{(n)}|\right)
(b-b')} \left(\|\xi_1^{(n)}\|_{L^{\infty}(b',
b)}+\|\xi_2^{(n)}\|_{L^{\infty}(b', b)}\right).
\end{eqnarray*}		
Since for sufficiently large $|n|$, we have 
\begin{eqnarray*}
|\alpha_n|-|\beta_2^{(n)}| =
|\alpha_n|-\left(\alpha_n^2-\kappa_2^2\right)^{1/2}
=\frac{\kappa_2^2}{|\alpha_n|+\left(\alpha_n^2-\kappa_2^2\right)^{1/2}}
\sim \frac{1}{|n|}.
\end{eqnarray*}	
Hence
\begin{eqnarray*}
\left|P^{(n)} \begin{bmatrix}
\hat{\zeta}_1^{(n)} \\[5pt] 
\hat{\zeta}_2^{(n)}
\end{bmatrix}\right| 
\lesssim \frac{1}{|n|}\left(\|\xi_1^{(n)}\|_{L^{\infty}(b',
b)}+\|\xi_2^{(n)}\|_{L^{\infty}(b', b)}\right),
\end{eqnarray*}
which proves
\begin{eqnarray*}
\left|p_j^{(n)}(b)\right|\lesssim
|n|e^{|\beta_2^{(n)}|(b'-b)}\left(|p_1^{(n)}(b')|+|p_2^{(n)}(b')|\right)
+\frac{1}{|n|}\left(\|\xi_1^{(n)}\|_{L^{\infty}(b',
b)}+\|\xi_2^{(n)}\|_{L^{\infty}(b', b)}\right).
\end{eqnarray*}
The proof is completed. 
\end{proof}

Taking $\boldsymbol{v}=\boldsymbol{\xi}$ in \eqref{DualProblem}, we have
\begin{equation}\label{Dualxixi}
\| \boldsymbol{\xi}\|_{\boldsymbol{L}^2(\Omega)}^2=a(\boldsymbol{\xi}, \boldsymbol{p})
-\int_{\Gamma}\left(\mathscr{T}-\mathscr{T}_N\right)\boldsymbol{\xi}\cdot\overline{\boldsymbol{p}}\,{\rm d}s
+\int_{\Gamma}\left(\mathscr{T}-\mathscr{T}_N\right)\boldsymbol{\xi}\cdot\overline{\boldsymbol{p}}\,{\rm d}s.
\end{equation}
By Lemma \ref{Lemma8}, we obtain 
\begin{eqnarray}
&&
\left|\int_{\Gamma}\left(\mathscr{T}-\mathscr{T}_N\right)\boldsymbol{\xi}
\cdot\overline{ \boldsymbol{p}}\,{\rm d}s \right|
\leq \Lambda\sum\limits_{|n|>N}
\left|\left(M^{(n)}\boldsymbol{\xi}_n(b)\right)\cdot
\overline{\boldsymbol{p}}_n(b)\right| \notag\\
&&\lesssim \Lambda \sum\limits_{|n|>N} |n|
\left(|\xi_1^{(n)}(b)|+|\xi_2^{(n)}(b)|\right)
\left(|p_1^{(n)}(b)|+|p_2^{(n)}(b)|\right)	\notag \\
&& \lesssim N^{-1}
\left[\sum\limits_{|n|>N}(1+n^2)^{1/2}\left(|\xi_1^{(n)}(b)|+|\xi_2^{(n)}
(b)|\right)^2\right]^{1/2}	
\left[\sum\limits_{|n|>N} |n|^3
\left(|p_1^{(n)}(b)|+|p_2^{(n)}(b)|\right)^2\right]^{1/2}\notag \\
&& \lesssim N^{-1} \|\boldsymbol{\xi}\|_{\boldsymbol H^{1/2}(\Gamma)}	
\left[\sum\limits_{|n|>N} |n|^3
\left(|p_1^{(n)}(b)|^2+|p_2^{(n)}(b)|^2\right)\right]^{1/2}\notag\\
&& \lesssim N^{-1} \|\boldsymbol{\xi}\|_{\boldsymbol H^{1}(\Omega)}	
\left[\sum\limits_{|n|>N} |n|^3
\left(|p_1^{(n)}(b)|^2+|p_2^{(n)}(b)|^2\right)\right]^{1/2}\label{P_2}.
\end{eqnarray}	

Following the similar proof in \cite[eq. (30)]{JLLZ-JSC-2017}, we may show that 
\begin{equation}\label{xixi}
\|\xi_j^{(n)}\|^2_{L^{\infty}(b', b)} \leq
\left(\frac{2}{\delta}+|n|\right)\|\xi_j^{(n)}(t)\|^2_{L^2(b', b)}
+|n|^{-1}\|\xi_j^{(n)}{}'(t)\|^2_{L^2(b', b)}.
\end{equation}
It follows from the Cauchy--Schwarz inequality that 
\begin{eqnarray*}
&& \sum\limits_{|n|>N} |n|^3
\left(|p_1^{(n)}(b)|^2+|p_2^{(n)}(b)|^2\right)\\
&& \lesssim \sum\limits_{|n|>N} |n|^3
\left\{n^2e^{2|\beta_2^{(n)}|(b'-b)}\left(|p_1^{(n)}(b')|^2+|p_2^{(n)}
(b')|^2\right)+\frac{1}{|n|^2}\left(\|\xi_1^{(n)}\|^2_{L^{\infty}(b',
b)}+\|\xi_2^{(n)}\|^2_{L^{\infty}(b', b)}\right)\right\}\\
&&\lesssim\sum\limits_{|n|>N} |n|^5
e^{2|\beta_2^{(n)}|(b'-b)}\left(|p_1^{(n)}(b')|^2+|p_2^{(n)}(b')|^2\right)
+\sum\limits_{|n|>N} |n|\left(\|\xi_1^{(n)}\|^2_{L^{\infty}(b',
b)}+\|\xi_2^{(n)}\|^2_{L^{\infty}(b', b)}\right)\\
&& := I_1+I_2.	
\end{eqnarray*}
Noting that the function $t^{4} e^{-2t}$ is bounded on $(0, +\infty)$, we have
\[
I_1\lesssim
\max\limits_{|n|>N}\left(n^{4}e^{2|\beta_2^{(n)}|(b'-b)}\right)
\sum\limits_{|n|>N}
|n|\left(|p_1^{(n)}(b')|^2+|p_2^{(n)}(b')|^2\right)
\lesssim \|\boldsymbol{p}\|_{\boldsymbol H^{1/2}(\Gamma')}^2
\lesssim \|\boldsymbol{\xi}\|_{\boldsymbol H^{1}(\Omega)}^2.
\]
Substituting \eqref{xixi} into $I_2$, we get
\begin{eqnarray*}
I_2 &\lesssim& \sum\limits_{|n|>N}
\left[|n|\left(\frac{2}{\delta}+|n|\right)\left(\|\xi_1^{(n)}\|_{L^{2}(b', b)}^2
+\|\xi_2^{(n)}\|_{L^{2}(b', b)}^2 \right)+
\left(\|\xi_1^{(n)'}\|_{L^{2}(b', b)}^2
+\|\xi_2^{(n)'}\|_{L^{2}(b', b)}^2
\right)\right]\\
&\leq& \sum\limits_{|n|>N}
\left[\left(\frac{2}{\delta}|n|+n^2\right)\|\boldsymbol{\xi}_n\|_{\boldsymbol L^{2}(b',
b)}^2 + \|\boldsymbol{\xi}_n^\prime\|_{\boldsymbol L^{2}(b', b)}^2\right]. 
\end{eqnarray*}

A simple calculation yields
\begin{eqnarray*}
\|\xi_j^{(n)}\|^2_{H^{1}(\Omega')} &=&
\Lambda\sum\limits_{n\in\mathbb{Z}} \int_{b'}^{b}
\left[\left(1+\alpha_n^2\right)|\xi_j^{(n)}(y)|^2+|\xi_j^{(n)'}(y)|^2\right]
{\rm d}y.
\end{eqnarray*}
It is easy to note that
\[
\frac{2}{\delta}|n|+n^2\lesssim 1+\alpha_n^2.
\]
Then
\[
I_2\lesssim  \|\boldsymbol{\xi}\|^2_{\boldsymbol H^{1}(\Omega')}\leq
\|\boldsymbol{\xi}\|^2_{\boldsymbol H^{1}(\Omega)}.
\]
Therefore,
\begin{equation}\label{xixixi}
\sum\limits_{|n|>N}|n|^3
\left(|p_1^{(n)}(b)|+|p_2^{(n)}(b)|\right)^2\lesssim
\|\boldsymbol{\xi}\|^2_{\boldsymbol H^{1}(\Omega)}.
\end{equation}
Plugging \eqref{xixixi} to \eqref{P_2}, we obtain
\begin{equation}\label{4thTerm}
|\int_{\Gamma}\left(\mathscr{T}-\mathscr{T}_N\right)\boldsymbol{\xi}
\cdot\overline{\boldsymbol{p}}\,{\rm d}s |\lesssim
\frac{1}{N}\|\boldsymbol{\xi}\|^2_{\boldsymbol{H}^{1}(\Omega)}.
\end{equation}

Now, we prove Theorem \ref{Main_Result}.

\begin{proof}
By Lemma \ref{Posterior: Lemma3}, Lemma \ref{Posterior: FirstTwo}, and Lemma
\ref{Posterior: Lemma5}, we have
\begin{eqnarray*}
\vvvert\boldsymbol{\xi}\vvvert^2_{\boldsymbol{H}^{1}(\Omega)} &=& \Re a(\boldsymbol{\xi}, \boldsymbol{\xi})
+\Re\int_{\Gamma} \left(\mathscr{T}-\mathscr{T}_N\right)\boldsymbol{\xi}\cdot\overline{\boldsymbol
{\xi}}{\rm d}s+2\omega^2 \int_{\Omega}
\boldsymbol{\xi}\cdot\overline{\boldsymbol{\xi}}{\rm d}\boldsymbol{x}
+\Re\int_{\Gamma}
\mathscr{T}_N\boldsymbol{\xi}\cdot\overline{\boldsymbol{\xi}}{\rm d}s\\	
&\leq& C_1\left[\left(\sum\limits_{T\in M_h} \eta^2_{T}\right)^{1/2}
+\max_{|n|>N}\left(|n|e^{|\beta_2^{(n)}|(b'-b)}\right) \|\boldsymbol{u}^{\rm
inc}\|_{\boldsymbol{H}^{1}(\Omega)}\right] 	
\|\boldsymbol{\xi}\|_{\boldsymbol{H}^{1}(\Omega)}\\
&&\qquad
+\left(C_2+C(\delta)\right)\|\boldsymbol{\xi}\|^2_{\boldsymbol{L}^2(\Omega)}
+\delta\|\boldsymbol{\xi}\|^2_{\boldsymbol{H}^{1}(\Omega)},
\end{eqnarray*}		
where $C_1, C_2, C(\delta)$ are positive constants. From \eqref{Equal_norm}, by choosing $\delta$ such that $\frac{\delta}{\min(\mu,
\omega^2)}<\frac{1}{2}$, we get
\begin{eqnarray}\label{Estimate_H1}
\vvvert\boldsymbol{\xi}\vvvert^2_{\boldsymbol{H}^{1}(\Omega)}
\leq  2C_1\left[\left(\sum\limits_{T\in M_h}
\eta^2_{T}\right)^{1/2}
+\max_{|n|>N}\left(|n|e^{|\beta_2^{(n)}|(b'-b)}\right)\|\boldsymbol{u}^{\rm
inc}\|_{\boldsymbol{H}^{1}(\Omega)}\right] 	
\|\boldsymbol{\xi}\|_{\boldsymbol{H}^{1}(\Omega)}\notag\\
+2\left(C_2+C(\delta)\right)\|\boldsymbol{\xi}\|^2_{\boldsymbol{L}
^2(\Omega)}.
\end{eqnarray}
It follows from \eqref{Dualxixi} and \eqref{4thTerm} that 
\begin{eqnarray}
&&\|\boldsymbol{\xi}\|^{2}_{\boldsymbol{L}^2(\Omega)}
= b(\boldsymbol{\xi}, \boldsymbol{p})
+\int_{\Gamma}\left(\mathscr{T}-\mathscr{T}_N\right)\boldsymbol{\xi}
\cdot\overline{\boldsymbol{p}}\,{\rm d}s
-\int_{\Gamma}\left(\mathscr{T}-\mathscr{T}_N\right)\boldsymbol{\xi}
\cdot\overline{\boldsymbol{p}}\,{\rm d}s \notag\\
&&\lesssim	\left[\left(\sum\limits_{T\in M_h}
\eta^2_{T}\right)^{1/2}
+\max_{|n|>N}\left(|n|e^{|\beta_2^{(n)}|(b'-b)}\right)
\|\boldsymbol{u}^{\rm inc}\|_{\boldsymbol{H}^{1}(\Omega)}\right] 	
\|\boldsymbol{\xi}\|_{\boldsymbol{H}^{1}(\Omega)}+N^{-1}
\|\boldsymbol{\xi}\|^2_{\boldsymbol{H}^{1}(\Omega)}.	\label{Estimate_L2}
\end{eqnarray}
Taking sufficiently large $N$ such that 
$\frac{2\left(C_2+C(\delta)\right)}{N}\frac{1}{\min(\mu, \omega^2)}<1$ and 
substituting \eqref{Estimate_L2} into \eqref{Estimate_H1},  
we obtain
\begin{eqnarray*}
\vvvert\boldsymbol{u}-\boldsymbol{u}_N^h
\vvvert_{\boldsymbol{H}^{1}(\Omega)}\lesssim 
\left(\sum\limits_{T\in M_h} \eta^2_{T}\right)^{1/2}
+\max_{|n|>N}\left(|n|e^{|\beta_2^{(n)}|(b'-b)}\right)
\|\boldsymbol{u}^{\rm inc}\|_{\boldsymbol{H}^{1}(\Omega)}.
\end{eqnarray*}
The proof is completed by noting the equivalence of the norms $\vvvert
\cdot\vvvert_{\boldsymbol{H}^1(\Omega)}$ and  $\|\cdot\|_{\boldsymbol{H}^1(\Omega)}$. 
\end{proof}

\section{Numerical experiments}\label{section:ne}

In this section, we introduce the algorithmic implementation of the adaptive finite element DtN method and present two numerical examples to demonstrate the effectiveness of the proposed method. 

\subsection{Adaptive algorithm}

Our implementation is based on the FreeFem \cite{H-JNM-2012}. The first-order linear element is used to solve the problem. It is shown in Theorem \ref{Main_Result} that the a posteriori error consists of two parts: the finite element discretization error $\epsilon_h$ and the DtN operator truncation error $\epsilon_N$, where 
\begin{eqnarray}\label{epsilonN}
\epsilon_h = \left(\sum\limits_{K\in\mathcal M_h} \eta^2_{K}\right)^{1/2},\quad 
\epsilon_N =\max_{|n|>N}\left( |n|e^{-|\beta_2^{(n)}|(b-b')}\right)
\|\boldsymbol{u}^{\rm inc}\|_{\boldsymbol{H}^{1}(\Omega)}. 
\end{eqnarray}
In the implementation, we choose the parameters $b, b'$ and $N$ based on \eqref{epsilonN} to make sure that the DtN operator truncation error is smaller than the finite element discretization error. In the following numerical experiments, $b'$ is chosen such that $b'=\max_{x\in(0,\Lambda)}f(x)$ and $N$ is the smallest positive integer that makes $\epsilon_N\leq 10^{-8}$. The adaptive finite element algorithm is shown in Table 1.

\begin{table}\label{Table1}	
\caption{The adaptive finite element DtN method.}
\vspace{1ex}
\hrule \hrule 
\vspace{0.8ex} 
\begin{enumerate}		
\item Given the tolerance $\epsilon>0$ and the parameter $\tau\in(0,1)$.
\item Fix the computational domain $\Omega$ by choosing $b$.
\item Choose $b'$ and $N$ such that $\epsilon_N\leq 10^{-8}$.
\item Construct an initial triangulation $\mathcal M_h$ over $\Omega$ and compute
error estimators.
\item While $\epsilon_h>\epsilon$ do
\item \qquad refine mesh $\mathcal M_h$ according to the strategy that \\
\[\text{if } \eta_{\hat{K}}>\tau \max\limits_{K\in\mathcal M_h}
\eta_{K}, \text{ refine the element } \hat{K}\in\mathcal M_h, \]
\item \qquad denote refined mesh still by $\mathcal M_h$, solve the discrete
problem \eqref{TBC: variational3} on the new mesh $\mathcal M_h$,
\item \qquad compute the corresponding error estimators.
\item End while.
\end{enumerate}
\vspace{0.8ex}
\hrule\hrule
\vspace{0.8ex} 
\end{table}

\subsection{Numerical experiments}

We report two examples to illustrate the numerical performance of the proposed method. The first example concerns the scattering by a flat surface and has an exact solution; the second example is constructed such that the solution has corner singularity. 

{\em Example 1.} We consider the simplest periodic structure, a straight line, where the exact solution is available. Let $S=\{y=0\}$ and take the artificial boundary $\Gamma=\{y=0.25\}$. The space above the flat surface is filled with a homogenenous and isotropic elastic medium, which is characterized by the Lam\'{e} constants $\lambda=2$, $\mu=1$. The rigid surface is impinged by the compressional plane wave $\boldsymbol{u}^{\rm inc}=\boldsymbol{d}e^{{\rm i}\kappa_1 \boldsymbol{x}\cdot\boldsymbol{d}}$, where the incident angle is $\theta=\pi/3$. The compressional and shear wavenumbers are $\kappa_1=\omega/2$ and $\kappa_2=\omega$, respectively, where $\omega$ is the angular frequency. It can be verified that the exact solution is 
\begin{eqnarray*}
u(\boldsymbol{x})=\frac{1}{\kappa_1}
\begin{bmatrix} 
\alpha \\
 -\beta 
 \end{bmatrix}
e^{{\rm i}\left(\alpha x -\beta y\right)}
-\frac{1}{\kappa_1}\left(\frac{\alpha^2-\beta\gamma}{\alpha^2+\beta\gamma}\right)
\begin{bmatrix}\alpha \\ \beta\end{bmatrix}
e^{{\rm i}\left(\alpha x + \beta y\right)}
-\frac{1}{\kappa_1}\left(\frac{2\alpha \beta}{\alpha^2+\beta\gamma}\right)
\begin{bmatrix}\gamma \\ -\alpha\end{bmatrix}
e^{{\rm i}\left(\alpha x +\gamma y\right)},
\end{eqnarray*}
where $\alpha=\kappa_1 \sin\theta, \beta=\kappa_1\cos\theta, \gamma=(\kappa_2^2-\alpha^2)^{1/2}$. The period $\Lambda=0.5$. Figure \ref{fig2} shows the curves of $\log e_h$ versus $\log {\rm DoF}_h$ with different angular frequencies, where $e_h=\|\boldsymbol u-\boldsymbol u_N^h\|_{\boldsymbol H^1(\Omega)}$ is the a priori error and ${\rm DoF}_h$ stands for the degree of freedom or the number of nodal points. It indicates that the meshes and the associated numerical complexity are  quasi-optimal, i.e., $e_h=\mathcal{O}({\rm DoF}_h^{-1/2})$ holds asymptotically.

\begin{figure}
\centering
\includegraphics[width=0.5\textwidth]{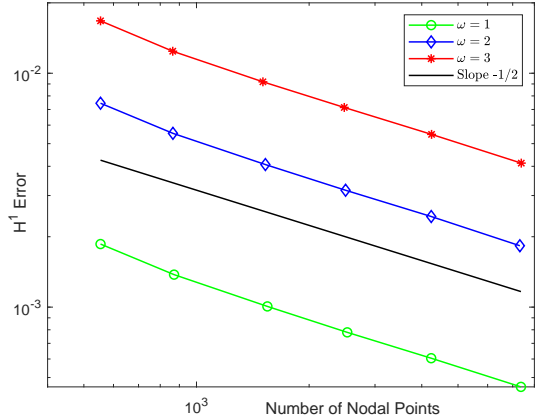}
\caption{Quasi-optimality of the a priori error estimates for Example 1.}
\label{fig2}
\end{figure}	

{\em Example 2.} This example concerns the scattering of the compressional plane wave by a piecewise linear surface, which has multiple sharp angles. The incident wave $\boldsymbol u^{\rm inc}$ and the parameters are chosen the same as Example 1, i.e., $b=0.25, \Lambda=0.5, \theta=\pi/3, \lambda=1, \mu=2$. Clearly, the solution has singularity around the corners of the surface. Since there is no exact solution for this example, we plot in Figure \ref{fig3} the curves of $\log\epsilon_h$ versus $\log{\rm DoF}_h$ at different angular frequencies, where $\epsilon_h$ is the a posteriori error. Again, it indicates that the meshes and the associated numerical complexity are quasi-optimal, i.e., $\epsilon_h=\mathcal{O}({\rm DoF}_h^{-1/2})$. Figure \ref{fig4} plots the contour of the magnitude of the numerical solution and its corresponding mesh at the angular frequency $\omega=2$. It is clear to note that the algorithm does capture the solution feature and adaptively refines the mesh around the corners where 
solution displays singularity.

\begin{figure}
\centering
\includegraphics[width=0.5\textwidth]{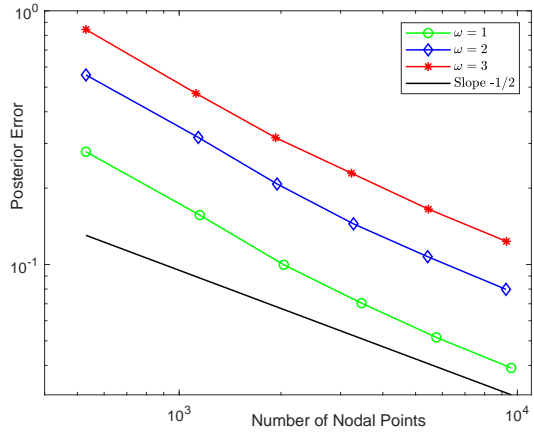}
\caption{Quasi-optimality of the a posteriori error estimates for Example 2.}
\label{fig3}
\end{figure}

\begin{figure}
\centering
\includegraphics[width=0.46\textwidth]{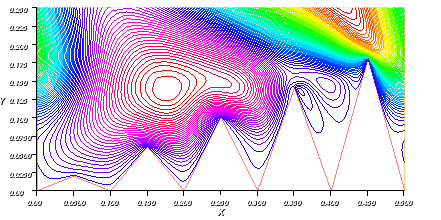}
\includegraphics[width=0.46\textwidth]{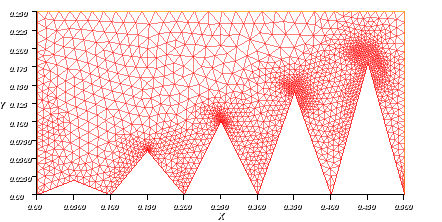}
\caption{The numerical solution of Example 2. (left) The contour plot of the magnitude of the solution; (right) The corresponding adaptively refined mesh.}
\label{fig4}
\end{figure}

\section{Conclusion}\label{section:c}

In this paper, we have presented an adaptive finite element DtN method for the elastic scattering problem in periodic structures. Based on the Helmholtz decomposition, a new duality argument is developed to obtain the a posteriori error estimate. It contains both the finite element discretization error and the DtN operator truncation error, which is shown to decay exponentially with respect to the truncation parameter. Numerical results show that the proposed method is effective and accurate. This work provides a viable alternative to the adaptive finite element PML method for solving the elastic surface scattering problem. It also enriches the range of choices available for solving wave propagation problems imposed in unbounded domains. One possible future work is to extend our analysis to the adaptive finite element DtN method for solving the 
three-dimensional elastic surface scattering problem, where a more complicated TBC needs to be considered.


\begin{thebibliography}{99}

\bibitem{A-M2AS-99}
T. Arens, The scattering of plane elastic waves by a one-dimensional periodic surface, Math. Methods Appl. Sci., 22 (1999), 55--72. 

\bibitem{A-JIEA-1999}
T. Arens, A new integral equation formulation for the scattering of plane elastic waves by diffraction gratings, J. Integral Equation Applications, 11 (1999), 275--297.

\bibitem{BA-AP-1973}
I. Babu\v{s}ka and A. Aziz, Survey lectures on Mathematical Foundation of the Finite Element Method, in the Mathematical Foundations of the Finite Element Method with Application to the Partial Differential Equations, ed. by A.Aziz, Academic Press, New York, 1973, 5--359.

\bibitem{BR-SJNA-1978}
I. Babu\v{s}ka and W. C. Rheinboldt, Error estimates for adaptive finite element computations, SIAM J. Numer. Anal., 15 (1978), 736--754.

\bibitem{B-SJNA-1995}
G. Bao, Finite element approximation of time harmonic waves in periodic structures, SIAM J. Numer. Anal., 32 (1995), 1155--1169.	

\bibitem{B-SJAM-1997}	
G. Bao, Variational approximation of Maxwell's equations in biperiodic structures,	 SIAM J. Appl. Math., 57(1997), 364--381.
	
\bibitem{BCM-SIAM-2001}
G. Bao, L. Cowsar, and W. Masters, eds., Mathematical Modeling in Optical Science, Frontiers in Appl. Math., SIAM, Philadelphia, 2001.	

\bibitem{BDC-JOSAA-1995}
G. Bao, D. C. Dobson, and J. A. Cox, Mathematical studies in rigorous grating theory, J. Opt. Soc. Amer. A, 12 (1995), 1029--1042.
	
\bibitem{BPW-MC-2010}
G. Bao, P. Li, and H. Wu, An adaptive edge element method with perfectly matched absorbing layers for wave scattering by periodic structures, Math. Comp., 79(2010), 1--34.

\bibitem{BW-SJNA-2005}
G. Bao and H. Wu, On the convergence of the solutions of PML equations for Maxwell's equations, SIAM J. Numer. Anal., 43 (2005), 2121--2143.	

\bibitem{BT-CPAM-1980}
A. Bayliss and E. Turkel, Radiation boundary conditions for numerical simulation of waves, Comm. Pure Appl.Math., 33 (1980), 707--725.

\bibitem{B-JCP-1994}
J. P. B\'{e}renger, A perfectly matched layer for the absorption of electromagnetic waves, J. Comput. Phys., 114 (1994), 185--200.

\bibitem{BP-MC-07}
J. H. Bramble and J. E. Pasciak, Analysis of a finite PML approximation for the three dimensional time-harmonic Maxwell and acoustic scattering problems, Math. Comp., 76 (2007), 597--614. 

\bibitem{BPT-MC-10}
J. H. Bramble, J. E. Pasciak, and D. Trenev, Analysis of a finite PML approximation to the three dimensional elastic wave scattering problem, Math.
Comp., 79 (2010), 2079--2101.  

\bibitem{BR-JOSA-1993}
O. Bruno and F. Reitich, Numerical solution of diffraction problems: a method of variation of boundaries, J. Opt. Soc. Am. A, 10 (1993), 1168-–1175, 
	
\bibitem{CKNS-SJNA-2008}
J. M. Cascon, C. Kreuzer, R. H. Nochetto, and K. G. Siebert, Quasi-optimal convergence rate for an adaptive finite element method, SIAM J. Numer. Anal., 46 (2008), 2524--2550.

\bibitem{CC-MC-2008}
J. Chen and Z. Chen, An adaptive perfectly matched layer technique for 3-D time-harmonic electromagnetic scattering problems, Math. Comp., 77 (2008), 673--698. 

\bibitem{CF-TAMS-1991}
X. Chen and A. Friedman, Maxwell's equations in a periodic structure, Trans. Amer. Math. Soc., 323 (1991), 465--507. 

\bibitem{CL-SINUM-2005}
Z. Chen and X. Liu, An adaptive perfectly matched layer technique for time-harmonic scattering problems, SIAM J. Numer. Anal., 43 (2005), 645--671. 

\bibitem{CW-SINUM-2003}
Z. Chen and H. Wu, An adaptive finite element method with perfectly matched absorbing layers for the wave scattering by periodic structures, SIAM J. Numer. Anal., 41 (2003), 799--826.	
	
\bibitem{CXZ-MC-2016}
Z. Chen, X. Xiang, and X. Zhang, Convergence of the PML method for elastic wave scattering problems, Math. Comp.,85 (2016), 2687--2714.
	
\bibitem{CW-MOTL-1994}
W. Chew and W. Weedon, A 3D perfectly matched medium for modified Maxwell's equations with stretched coordinates, Microwave Opt. Techno. Lett., 13 (1994), 599--604.	

\bibitem{CM-SISC-1998}
F. Collino and P. Monk, The perfectly matched layer in curvilinear coordinates, SIAM J. Sci. Comput., 19 (1998), 2061--1090.

\bibitem{CT-G-2001}
F. Collino and C. Tsogka, Application of the perfectly matched absorbing layer model to the linear elastodynamic problem in anisotropic heterogeneous media, Geophysics, 66 (2001), 294--307.
	
\bibitem{D-SJNA-1996}
W. D\"{o}rfler, A convergent adaptive algorithm for Poisson's equation, SIAM J. Numer. Anal., 33 (1996), 1106--1124.	
		
\bibitem{EH-MMAS-2010}
J. Elschner and G. Hu, Variational approach to scattering of plane elastic waves by diffraction gratings, Math. Meth. Appl. Sci., 33 (2010), 1924--1941.

\bibitem{EM-MC-1977}
B. Engquist and A. Majda, Absorbing boundary conditions for the numerical simulation of waves, Math. Comp., 31 (1977), 629--651.

\bibitem{GG-WM-2003}
G. K. G\"{a}chter and M. Grote, Dirichlet-to-Neumann map for three-dimensional elastic waves, Wave Motion, 37 2003), 293--311.

\bibitem{GK-WM-1990}
D. Givoli and J. B. Keller, Non-reflecting boundary conditions for elastic waves, Wave Motion, 12 (1990), 261--279.

\bibitem{GK-JCP-1995}
M. Grote and J. B. Keller, On nonreflecting boundary conditions, J. Comput. Phys., 122 (1995), 231--243.

\bibitem{HSB-JASA-1996}
F. D. Hastings, J. B. Schneider, and S. L. Broschat, Application of the perfectly matched layer (PML) absorbing boundary condition to elastic wave propagation, J. Acoust. Soc. Am., 100 (1996), 3061--3069.				
						
\bibitem{H-JNM-2012}
F. Hecht, New development in FreeFem++, J. Numer. Math., 20 (2012), 251--265.
	
\bibitem{HSZ-SIMA-2003}
T. Hohage, F. Schmidt, and L. Zschiedrich, Solving time-harmonic scattering problems based on the pole condition. II: Convergence of the PML method, SIAM J. Math. Anal., 35 (2003), 547--560.

\bibitem{LS-C-1998}
M. Lassas and E. Somersalo, On the existence and convergence of the solution of PML equations, Computing, 60 (1998), 229--241.	

\bibitem{HNPX-JCAM-2011}
G. C. Hsiao, N. Nigam, J. E. Pasiak, and L. Xu, Error analysis of the DtN-FEM for the scattering problem in acoustic via Fourier analysis, J. Comput. Appl. Math., 235 (2011), 4949--4965.	

\bibitem{JLLZ-MMNA-2017} 
X. Jiang, P. Li, J. Lv, and W. Zheng, An adaptive finite element PML method for the elastic wave scattering problem in periodic structures, ESAIM: Math. Model. Numer. Anal., 27 (2017), 1843--1870.
	
\bibitem{JLLZ-JSC-2017}
X. Jiang, P. Li, J. Lv, and W. Zheng, An adaptive finite element method for the wave scattering with transparent boundary condition, J. Sci. Comput., 72 (2017), 936--956.	

\bibitem{JLZ-CCP-2013}
X. Jiang, P. Li, and W. Zheng, Numerical solution of acoustic scattering by an adaptive DtN finite element method, Commun. Comput. Phys., 13 (2013),  1227--1244.	
	
\bibitem{M-JCAM-1998}
P. Monk, A posteriori error indicators for Maxwell's equations, J. Comput. Appl.
Math., 100 (1998), 173--190.

\bibitem{MNS-SJNA-2000}
P. Morin, R. H. Nochetto, and K. G. Siebert, Data oscillation and convergence of adaptive FEM, SIAM J. Numer. Anal., 38 (2000), 466--488.

\bibitem{NS-SJMA-1991}
J. C. N\'{e}d\'{e}lec and F. Starling, Integral equation methods in a quasi-periodic diffraction problem for the time-harmonic
Maxwell's equations, SIAM J. Math. Anal., 22 (1997), 1679--1701.		  

\bibitem{LWZ-IP-2015}
P. Li, Y. Wang, and Y. Zhao, Inverse elastic surface scattering with near-field data, Inverse Problems, 31 (2015), 035009.

\bibitem{LY-2019}
P.  Li and X. Yuan, Convergence of an adaptive finite element DtN method for the elastic wave scattering problem, arXiv:1903.03606.	

\bibitem{P-80}
R. Petit, ed.,Electromagnetic Theory of Gratings, Springer, 1980.
	
\bibitem{S-MC-74}
A. H. Schatz, An observation concerning Ritz--Galerkin methods with indefinite bilinear forms, Math. Comp., 28 (1974), 959--962.	

\bibitem{TY-ANM-1998}
E. Turkel and A. Yefet, Absorbing PML boundary layers for wave-like equations, Appl. Numer. Math., 27 (1998), 533--557.

\bibitem{V-1996}
R. Verf\"{u}rth, A review of a posterior error estimation and adaptive mesh refinement techniques, Teubner, Stuttgart, 1996.
	
\bibitem{WBLLW-SJNA-2015}
Z. Wang, G. Bao, J. Li, P. Li, and H. Wu, An adaptive finite element method for the diffraction grating problem with transparent boundary condition, SIAM J. Numer. Anal., 53 (2015), 1585--1607.		

\bibitem{WL-JOSA-2009}
Y. Wu and Y. Y. Lu, Analyzing diffraction gratings by a boundary equation Neumann-to-Dirichlet map method, J. Opt. Soc. Am. A, 26 (2009), 2444--2451. 

\end{thebibliography}
\end{document}